\theoremstyle{plain}
\theoremstyle{definition}
\swapnumbers \theoremstyle{plain}
\newtheorem{theorem}{Theorem}[section]
\newtheorem{setup}[theorem]{Setup}
\newtheorem{proposition}[theorem]{Proposition}
\newtheorem{lemma}[theorem]{Lemma}
\newtheorem{corollary}[theorem]{Corollary}
\newtheorem{nothing}[theorem]{}
\newtheorem{sub}{}[theorem] 
\newtheorem{subproposition} [sub]{Proposition}
\newtheorem{sublemma}   [sub]{Lemma}
\newtheorem{subdefinition}[sub]{Definition}
\newcommand{\setspec}[2]{\big\{\,#1\, \mid \,#2\, \big\}}
\theoremstyle{definition}
\newtheorem{definition}[theorem]{Definition}
\newtheorem{example}[theorem]{Example}
\newtheorem{nothing*}[theorem]{}
\newtheorem{notation}[theorem]{Notation}
\newtheorem{remark}[theorem]{Remark}
\newcommand{\Spec}{ \operatorname{{\rm Spec}}}
\newcommand{\haut}{ \operatorname{{\rm ht}}}
\newcommand{\trdeg}{ \operatorname{{\rm trdeg}}}
\newcommand{\Supp}{ \operatorname{{\rm Supp}}}
\newcommand{\lnd}{\operatorname{\text{\sc lnd}}}
\newcommand{\klnd}{\operatorname{\text{\sc klnd}}}
\newcommand{\Aut}{ \operatorname{{\rm Aut}}}
\newcommand{\Frac}{ \operatorname{{\rm Frac}}}
\newcommand{\der}{ \operatorname{{\rm Der}}}
\newcommand{\Min}{ \operatorname{{\rm min}}}
\newcommand{\Sing}{ \operatorname{{\rm Sing}}}
\newcommand{\depth}{ \operatorname{{\rm depth}}}
\newcommand{\ak}{ \operatorname{{\rm AK}}}
\newcommand{\ml}{ \operatorname{{\rm  ML}}}
\newcommand{\bk}{{\ensuremath{\rm \bf k}}}
\renewcommand{\epsilon}{\varepsilon}
\renewcommand{\phi}{\varphi}
\renewcommand{\emptyset}{\varnothing}
\title{Singular Points of Affine ML-Surfaces}
\author{Ratnadha Kolhatkar}
\address{Department of Mathematics and Statistics, University of Ottawa, Ottawa, Canada K1N6N5}
\email{rkolh090@uottawa.ca}
\begin{document}
\keywords{Affine surfaces, locally nilpotent derivations, Makar-Limanov invariant, group actions, fibrations.}
\subjclass[2000]{Primary 14R10;  Secondary 14R20, 14R25.}
\maketitle
\begin{abstract} We give a geometric proof of the fact  that any affine surface with trivial Makar-Limanov invariant has finitely many singular points. We deduce that a complete intersection surface with trivial Makar-Limanov invariant is normal.
\end{abstract}

\section{Notation and introduction}
\noindent Let us first fix some notation and recall some basic definitions.
Throughout this paper, unless otherwise specified, {\it $\bk$ will always denote a field of characteristic zero}.  A domain means an integral domain. Given a domain $R$, $\Frac R$ denotes the field of fractions of $R$. By $\bk^{[n]}$, we mean the polynomial ring in $n$ variables over $\bk$ and $\Frac(\bk^{[n]})$ will be denoted by $\bk^{(n)}$.
The set of singular points of a variety $X$ will be denoted by $\Sing (X)$.
\begin{definition} {\rm Given a $\bk$-algebra  $B$, a derivation $D: B \rightarrow B$  is \textit{locally nilpotent} if
for each $b\in B$, there exists a  natural number $n$ (depending on
$b$) such that $D^n (b) = 0$. We use the following notations:
\[\der(B)=\setspec{D}{\text{$D$ is a derivation of $B$}}\]
\[\lnd(B)=\setspec{ D\in \der (B)}{\text{$D$ is locally nilpotent}}\]
 \[\klnd(B)=\setspec{\ker D}{D\in \lnd (B), D\neq 0}\]
 \noindent Given a $\bk$-domain B, one defines its {\it Makar-Limanov invariant} by
  \[ \ml (B) = \bigcap_{D\in \lnd(B)} \ker D.\]
  \noindent If $X = \Spec B$ is an affine $\bk$-variety, define $\ml (X) = \ml (B)$. The Makar-Limanov invariant plays an important role in classifying and distinguishing affine varieties.
  We say that $B$ has trivial Makar-Limanov invariant if $\ml(B) = \bk$.
 }\end{definition}
 Affine spaces $\mathbb A^n_{\bk}$ are the simplest examples of  varieties with trivial Makar-Limanov invariant. While it is known that $\mathbb A^1_\bk$ is the only affine curve which has trivial Makar-Limanov invariant, the class of affine surfaces with trivial Makar-Limanov invariant contains
 many more surfaces, some of which are not even normal. (See Example \ref{example}, for instance.)
 \par Let $\mathcal M (\bk)$ denote the class of 2-dimensional
affine $\bk$-domains which have trivial Makar-Limanov invariant.
 We say  that an affine surface $S= \Spec R$ belongs to the
class $\mathcal M (\bk)$ if $R\in\mathcal M (\bk)$. Such a surface $S$ is also
called a {\it $\ml$-surface}.\\
The following question arises naturally: {\it Classify all surfaces in the class $\mathcal M (\bk)$}.
\par In recent years, researchers including Bandman, Daigle,
Dubouloz, Gurjar, Masuda,  Makar-Limanov, Miyanishi, and Russell (see
\cite{ML}, \cite{Dag6}, \cite{Dag7}, \cite{Dub}, \cite{Gur}, \cite{MaMi}) have been actively investigating properties of normal (or smooth) surfaces belonging to the class $\mathcal M (\bk)$. However, it is desirable to understand what happens when we drop the assumption of normality. For instance, is is natural to ask {\it what are all hypersurfaces of the affine space $\mathbb A^3_{\bk}$ with trivial Makar-Limanov invariant}, and it is not a priori clear that all those surfaces are normal: the fact that they are indeed normal is a consequence of the present paper.
\par In this paper, we prove that a surface in the class $\mathcal M(\bk)$ has only finitely many singular points. As an application, we prove that any  complete intersection surface with trivial Makar-Limanov invariant is normal. Note that these results are valid over any field $\bk$ of characteristic zero.
The results of this paper will be used in a joint paper with D. Daigle \cite{DK}, where we classify all hypersurfaces of $\mathbb A^3_\bk$ (more generally, complete intersection surfaces over $\bk$) with trivial Makar-Limanov invariant.
\par To understand the necessity of some of the arguments given in this paper, the reader should keep in mind certain pathologies that occur when $\bk$ is not assumed to be algebraically closed. For instance, surfaces $S= \Spec R$ belonging to $\mathcal M (\bk)$ are not necessarily rational over $\bk$ and may have very few $\bk$-rational points; moreover, if $\bar \bk$ is the algebraic closure of $\bk$, then $\bar \bk \otimes_{\bk} R$ is not necessarily an integral domain.

\bigskip

\noindent  \textbf {Acknowledgements :} The author wishes to thank Professor Daniel Daigle for his many helpful suggestions and his valuable help in preparing this manuscript.
\section{Preliminaries} \noindent In this section, we gather some basic results and known facts.

\begin{nothing} \label{basic}{\rm Suppose that $B$ is a $\bk$-domain, let $D$  be a nonzero
locally nilpotent derivation of $B$, and let $A=\ker D$. The following are well-known definitions and facts about locally nilpotent derivations: \\
(i) $A$ is {\it factorially closed} in $B$ {\rm(}i.e., the conditions \, $x, y \in B\setminus \{0\}$ and $xy \in A$ imply that $x, y \in A${\rm )}. Consequently, $A$ is algebraically closed in $B$.   \\
(ii) \label{localizationlnd}  Consider the multiplicative set $S= A\setminus \{0\}$ of $B$. We can extend $D$  to an element
 $\mathfrak D \in \lnd (S^{-1}B$) defined by $\mathfrak D(\frac{b}{s}) =
\frac{D(b)}{s}$\,\,. It is well-known that $S^{-1} B = (\Frac A)^{[1]}$.\\
(iii) For every $\lambda \in \bk$, the  map
\[e^{\lambda D}: B\rightarrow B, \quad b\mapsto \sum_{n=0}^{\infty}
\lambda^n \frac{D^n (b)}{n!}\] is a $\bk$-algebra automorphism of $B$. \\
 (iv) Let $\pi:\Spec
B \longrightarrow \Spec A$ be the canonical morphism induced by the
inclusion map $A \hookrightarrow B$. Then there exists a nonempty open set $U\subseteq \Spec A$ such that
\[\text{ $\pi^{-1}(\mathfrak p) \cong \mathbb{A}^1_{\kappa(\mathfrak p)}$ for every $\mathfrak p \in U$, where $\kappa(\mathfrak p)$ is
the residue  field $ A_{\mathfrak p}/ \mathfrak p A_{\mathfrak p}$.}\]
Furthermore, if $\bk$ is algebraically closed and $A$ is $\bk$-affine, then
  \[\textit{$\pi^{-1}(\mathfrak m) \cong \mathbb{A}^1_{\kappa(\mathfrak m)}= \mathbb{A}^1_{\bk}$ \,  for every  closed point \, $\mathfrak m$\, of \, $U$.}\]}
\end{nothing}
\noindent \begin{lemma}\label{notapoint}Given an affine $\bk$-surface $X= \Spec B$, let $A_1$ and $A_2$ be two affine subalgebras of $B$ of dimension $1$. Set $Y_i= \Spec A_i$ and  let $\xymatrix{Y_1\ar@{<-}^-{f_1}[r] & \Spec B\ar[r]^{f_2} & Y_2}$ be the canonical morphisms determined by the inclusions $A_i\hookrightarrow B$ {\rm (}for $i=1,2${\rm)}. If $B$ is algebraic over its subalgebra  $\bk[A_1 \cup A_2]$, then
\[E = \setspec {y\in Y_2}{f_1(f_2^{-1} (y))\,  \text{is a point}}\]
is not a dense subset of $ Y_2$, where by ``$y\in Y_2$" we mean that $y$ is a closed point of $Y_2$.
\end{lemma}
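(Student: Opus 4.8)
The plan is to argue by contradiction, the key move being to replace $X$ by the closed image of $(f_1,f_2)\colon X\to Y_1\times Y_2$, inside which $f_1$ and $f_2$ become restrictions of the two coordinate projections. Set $R=\bk[A_1\cup A_2]\subseteq B$. Since $B$ is algebraic over $R$ and finitely generated over it, $\Frac B$ is a finite extension of $\Frac R$, and $\dim R=\dim B=2$; thus $Z:=\Spec R$ is an affine surface. The multiplication map $A_1\otimes_\bk A_2\twoheadrightarrow R$ is surjective, so $Z$ is a closed subvariety of $Y_1\times Y_2$. Let $p_i\colon Z\to Y_i$ be the restrictions of the projections and $g\colon X\to Z$ the dominant morphism induced by $R\hookrightarrow B$, so that $f_i=p_i\circ g$ for $i=1,2$.

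The first step is to understand the generic behaviour of $f_2$. Because $\Frac B$ is finite over $\Frac R$, there is a nonzero $r\in R$ such that $g$ restricts to a finite \emph{surjective} morphism $g^{-1}(V)\to V$ over the dense open $V=\Spec R_r$ of $Z$. Since $p_2$ is dominant with $\dim Z=2>1=\dim Y_2$, while $Z\setminus V$ has dimension $\le 1$, a routine argument shows that after removing finitely many points from $Y_2$ — namely the images of the curves in $Z\setminus V$ that $p_2$ contracts, together with the finitely many points over which a curve in $Z\setminus V$ dominating $Y_2$ could still meet a fibre in dimension $1$ — one obtains a dense open $Y_2^{\circ}\subseteq Y_2$ such that for every closed point $y\in Y_2^{\circ}$ the fibre $p_2^{-1}(y)$ has a one-dimensional component meeting $V$ in an infinite set. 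For such $y$, surjectivity of $g$ over $V$ gives
\[
g\bigl(f_2^{-1}(y)\bigr)\ \supseteq\ g\bigl(g^{-1}(p_2^{-1}(y)\cap V)\bigr)\ =\ p_2^{-1}(y)\cap V ,
\]
which is infinite; in particular $f_2^{-1}(y)\ne\emptyset$.

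Now suppose $y\in Y_2^{\circ}$ also belongs to $E$, so that $f_1(f_2^{-1}(y))$ is a single closed point $y'\in Y_1$. Then every point of $g(f_2^{-1}(y))$ lies in $p_1^{-1}(y')\cap p_2^{-1}(y)$, and via the closed immersion $Z\hookrightarrow Y_1\times Y_2$ this set injects into the set of points of $Y_1\times Y_2$ lying over $y'$ and over $y$, i.e.\ into $\Spec\bigl(\kappa(y')\otimes_\bk\kappa(y)\bigr)$, which is finite because $\kappa(y')$ and $\kappa(y)$ are finite over $\bk$. Hence $g(f_2^{-1}(y))$ is finite, contradicting the previous paragraph. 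Therefore $E\cap Y_2^{\circ}=\emptyset$, so $E$ is contained in the finite set $Y_2\setminus Y_2^{\circ}$ and in particular is not dense in the irreducible curve $Y_2$.

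The only delicate point is the middle of the argument: passing from ``$p_2$ is dominant and $Z\setminus V$ is a proper closed subset'' to ``for all but finitely many $y$, $p_2^{-1}(y)$ meets $V$ in an infinite set.'' This uses nothing deeper than the dimension theory of morphisms — upper semicontinuity of fibre dimension, the fact that a dominant morphism of irreducible varieties has fibres of the expected dimension over a dense open, and the fact that a dominant morphism of curves has finite general fibres — but it is where the bookkeeping must be done carefully; once one has passed to $Z\subseteq Y_1\times Y_2$, everything else is formal.
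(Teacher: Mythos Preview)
The paper does not actually prove this lemma: it states ``We leave the proof of Lemma~\ref{notapoint} to the reader, as it is basic algebraic geometry and is not directly related to the subject matter of this paper.'' So there is no argument to compare against, only your own to assess.

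Your proof is correct. The reduction to $Z=\Spec R\hookrightarrow Y_1\times Y_2$ is exactly the right move: it turns the vague condition ``$f_1(f_2^{-1}(y))$ is a point'' into the concrete statement that $g(f_2^{-1}(y))$ lies over a single closed point of $Y_1\times Y_2$, hence is finite. The generic-finiteness step (finding $r\in R$ with $B_r$ finite over $R_r$, hence $g^{-1}(V)\to V$ finite and therefore surjective) is standard, and your fibre-dimension bookkeeping is sound: every nonempty fibre of the dominant map $p_2\colon Z\to Y_2$ has dimension at least $\dim Z-\dim Y_2=1$; the curves in $Z\setminus V$ either contract to a point of $Y_2$ (finitely many such points, removed) or dominate $Y_2$ and hence meet each fibre in a finite set. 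One harmless redundancy: a curve in $Z\setminus V$ dominating $Y_2$ can \emph{never} meet a fibre of $p_2$ in dimension~$1$, so the extra points you propose to remove in that case do not exist. Otherwise the argument is clean and complete, and it supplies precisely the ``basic algebraic geometry'' the paper declines to spell out.
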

\noindent We leave the proof of Lemma \ref{notapoint} to the reader, as it is basic algebraic geometry and is not directly related to the subject matter of this paper.
\begin{definition} A domain $A$ of transcendence degree $1$ over a field $\bk$ is called a {\it polynomial curve} over $\bk$ if it satisfies the following equivalent conditions:\\
(i) $A$ is a subalgebra of $\bk^{[1]}$.\\
(ii) $\Frac A = \bk^{(1)}$ and $A$ has one rational place at infinity.
\end{definition}
\begin{notation}Given a field extension $F/\bk$, let $\mathbb P_{F/\bk}$ be the set of valuation rings $R$ of $F/\bk$ such that $R\neq F$.
\end{notation}
\begin{lemma}\label{polynomial curve} Let $A$ be a $\bk$-domain. If there exists an algebraic extension $\bk'$ of $\bk$ such that $\bk'\otimes_\bk A$
is a polynomial curve over $\bk'$, then $A$ is a polynomial curve over $\bk$.
\end{lemma}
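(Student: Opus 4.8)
Write $B=\bk'\otimes_\bk A$, which by hypothesis is a polynomial curve over $\bk'$; since $\bk'/\bk$ is algebraic, $A\subseteq B$ is again a domain with $\trdeg_\bk A=1$. Put $L=\Frac A$. The strategy is to study $L$ together with the ``places at infinity'' of $A$ by comparing them with those of $B$. First, $\bk'\otimes_\bk L$ is the localization of $B$ at $A\setminus\{0\}$, hence a domain; being integral over the field $L$ it is itself a field, so it equals $\Frac B$, a rational function field $\bk'(t)$ over $\bk'$. Thus $\bk'(t)/L$ is a constant field extension with field of constants $\bk'$; tensoring once more with $\bar\bk$ over $\bk'$ gives $\bar\bk\otimes_\bk L=\bar\bk(t)$, so the smooth projective model $C$ of $L/\bk$ is geometrically integral of genus $0$.

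Next I would pin down the place at infinity. Let $\mathcal{O}\subseteq\bk'(t)$ be the valuation ring of the unique place at infinity of $B$; it is rational, i.e.\ has residue field $\bk'$, and $B\not\subseteq\mathcal{O}$. Set $R=\mathcal{O}\cap L$. Then $R\neq L$ (otherwise $\bk'(t)=\bk'\otimes_\bk L\subseteq\mathcal{O}$), $A\not\subseteq R$, and --- the key point --- $R$ is the \emph{only} valuation ring of $L/\bk$ not containing $A$: if $R'$ is such a ring, extend it to a valuation ring $\tilde R$ of $\bk'(t)/\bk'$; then $A\not\subseteq\tilde R$, hence $B\not\subseteq\tilde R$ (the image of $B$ in $\bk'(t)$ is the subring generated by $\bk'$ and $A$), so $\tilde R=\mathcal{O}$ and $R'=R$. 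In geometric terms, $\mathcal{O}$ is the unique point of $C\times_\bk\bk'$ lying over the closed point $P$ of $C$ determined by $R$.

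From this I would deduce that $R$ is rational. Writing $\kappa$ for the residue field of $R$ (algebraic over $\bk$), the closed subscheme $P\times_\bk\bk'=\Spec(\kappa\otimes_\bk\bk')$ of $C\times_\bk\bk'$ is supported at the single point $\mathcal{O}$, is reduced (characteristic $0$), and has residue field $\bk'$ there; hence $\kappa\otimes_\bk\bk'=\bk'$, and comparing $\bk'$-dimensions gives $\kappa=\bk$. So $L/\bk$ has genus $0$ and a rational place $R$; by Riemann--Roch (genus $0$ and $\deg P=1$ give $\ell(P)=2$) there is $s\in L$ with pole divisor exactly $P$, so $L=\bk(s)$ and $R$ is the place ``$s=\infty$''. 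Since $A$ lies in every valuation ring of $L/\bk$ except $R$, we conclude $A\subseteq\bigcap_{P'\neq R}\mathcal{O}_{P'}=\bk[s]=\kk{1}$, i.e.\ $A$ is a polynomial curve over $\bk$.

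The main obstacle is the second step: a priori $A$ could have several ``places at infinity'', and one must use that $B$ has exactly one --- and that it is rational --- to force $A$ to have a single, rational place at infinity, the transfer of rationality down to $\bk$ resting on the structure of constant field extensions. The other ingredients (preservation of the domain property, transcendence degree and genus under the algebraic base change $\bk\hookrightarrow\bk'$, and Riemann--Roch on a genus-zero curve with a rational point) are standard.
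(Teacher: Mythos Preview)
Your argument is correct and follows the same strategy as the paper's proof: identify $\Frac(\bk'\otimes_\bk A)$ as a constant field extension of $\Frac A$, deduce genus zero, show that any place of $\Frac A$ at infinity lies under the unique place of $B$ at infinity (hence is unique), and then compare degrees to obtain rationality of that place. The differences are only presentational: the paper first reduces to $[\bk':\bk]<\infty$ and cites Stichtenoth (Theorem~III.6.3) for genus invariance and unramifiedness before applying the $ef$-formula, whereas you work directly in scheme language (your fiber computation $\kappa\otimes_\bk\bk'=\bk'$ is exactly the $ef$-formula in disguise) and make the concluding Riemann--Roch step explicit.
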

\begin{proof} We sketch a proof of this fact, as we were unable to find a suitable reference.
It is easy to prove that $A$ is affine. We may assume that $[\bk' : \bk] < \infty$. Let $F= \Frac A$ and $F' = \Frac A'$, where $A' = \bk' \otimes_{\bk} A$. Note that $[F' : F] = [\bk' :\bk]$ and $F' = \bk'F$. In the terminology of \cite{S}, the function field $F'/ \bk'$ is an algebraic constant field extension of $F/\bk$. By Theorem III.6.3 of \cite{S}, $F'/\bk'$ has same genus as $F/\bk$ (hence, $F/\bk$ has genus zero) and $F'/ F$ is unramified. It remains to prove that $A$ has one rational place at infinity. Let
\[\text{$E= \setspec{R\in \mathbb P_{F/\bk}}{A\nsubseteq R}$ and
$E'= \setspec{R'\in \mathbb P_{F'/\bk'}}{\bk'\otimes_{\bk} A
\nsubseteq R'}$}.\] If $R$ is any element of $E$, then every $R' \in
\mathbb P_{F'/\bk'}$ lying over $R$ (i.e., satisfying $R'\cap F = R$) must belong to $E'$. But $E'$
is a singleton, say $E' = \{R'\}$. It follows that $E$ is a singleton, say $E= \{R\}$.  Let
$\kappa'$ and $\kappa$ be the residue fields of $R'$ and $R$,
respectively.  Then $[F':F] = e f$, where $f = [\kappa': \kappa]$
and $e$ is the ramification index of $R'$ over $R$. As $F'/F$ is
unramified, we have $e=1$. Since $\bk'\otimes_{\bk} A$ is a polynomial
curve over $\bk'$, $\kappa' = \bk'$. Hence
\[[\bk' : \bk] = [ F' : F] = ef = [ \kappa' : \kappa] = [\bk':
\kappa].\] Thus, $\kappa = \bk$ and $A$ has one rational place at
infinity.
\end{proof}
\noindent The following lemma can be obtained as an easy consequence of \text{\rm\cite[Lemma $3.1$]{Dag5}}.
\begin{lemma}\label{gjvd} Let $B$ be a $\bk$-algebra and $f(T)\in B[T]$, where $T$ is an indeterminate. \\
{\rm (a)} If $f(T)$ has
infinitely many roots in $\bk$, then $f(T)=0$.\\
{\rm (b)} If $J$ is an ideal of $B$ and $f(\lambda)\in J$ for infinitely many $\lambda \in \bk$, then $f(T) \in J[T]$.
\end{lemma}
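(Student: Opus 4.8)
The plan is to prove part (a) by a Vandermonde argument and then to obtain part (b) by passing to the quotient ring $B/J$. Write $f(T) = \sum_{i=0}^{n} a_i T^i$ with $a_i \in B$. If $B$ is the zero ring there is nothing to prove, so I will assume $B \neq 0$; then the structure map $\bk \to B$ is injective, so I may view $\bk$ as a subring of $B$, and --- crucially --- every nonzero element of $\bk$ becomes a unit of $B$.

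For part (a): pick $n+1$ pairwise distinct elements $\lambda_0, \dots, \lambda_n$ from among the infinitely many roots of $f$ in $\bk$. The equations $f(\lambda_j) = 0$ for $0 \le j \le n$ say exactly that the column vector $(a_0, \dots, a_n)^{t}$ lies in the kernel of the Vandermonde matrix $V = (\lambda_j^{\,i})_{0 \le j, i \le n}$, viewed as a matrix over $B$. Since $\det V = \prod_{0 \le j < k \le n}(\lambda_k - \lambda_j)$ is a nonzero element of $\bk$, it is a unit of $B$; hence $V$ is invertible over $B$ and $a_0 = \dots = a_n = 0$, i.e.\ $f(T) = 0$.

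For part (b): if $J = B$ then $J[T] = B[T]$ and the conclusion is trivial, so assume $J \neq B$. Then $B/J$ is a nonzero $\bk$-algebra, so $\bk$ embeds in $B/J$; consequently the infinitely many scalars $\lambda$ with $f(\lambda) \in J$ have pairwise distinct images in $B/J$, and each is a root of the image $\bar f(T) \in (B/J)[T]$ of $f$. Applying part (a) to the $\bk$-algebra $B/J$ gives $\bar f(T) = 0$, which is precisely the statement $f(T) \in J[T]$.

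I do not anticipate a genuine obstacle here: the whole content is the elementary fact that a nonzero scalar of $\bk$ is invertible in any $\bk$-algebra, which is what legitimizes inverting the Vandermonde matrix over $B$ rather than merely over a field, together with the bookkeeping for the degenerate cases $B = 0$ and $J = B$. Alternatively, one could simply invoke \cite[Lemma 3.1]{Dag5} for part (a) and still deduce part (b) as above.
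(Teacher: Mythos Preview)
Your argument is correct. The Vandermonde approach for (a) is the natural way to make the statement self-contained, and your reduction of (b) to (a) via the quotient $B/J$ is exactly right; the handling of the degenerate cases $B=0$ and $J=B$ is clean.

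The paper itself does not give a proof: it simply declares the lemma to be ``an easy consequence of \cite[Lemma~3.1]{Dag5}''. So your write-up is more detailed than the paper's, supplying the actual Vandermonde computation in place of the external citation (which you also note as an alternative). There is no substantive difference in strategy, only in how much is spelled out.
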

\begin{definition}\label{intid}Let $R$ be a ring and $D\in \der(R)$. An ideal
 $I$ of $R$ is called an {\it integral ideal} for $D$ if $D(I)\subseteq I$.
\end{definition}
\begin{lemma}\label{opwbdkwed}Let $R$ be a $\bk$-domain, and  let $I$ be a nonzero ideal of $R$. If $A\in
\klnd(R)$, then the following statements are equivalent:\\
{\rm (1)} $I\cap A \neq (0)$.\\
{\rm (2)} There exists $ D\in \lnd(R)$ such that $\ker D =A$ and $I$
is an integral ideal for $D$.
\end{lemma}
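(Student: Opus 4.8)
The plan is to prove both implications directly from the definitions, using the standard device of multiplying a locally nilpotent derivation by an element of its kernel. First I would record two trivial observations: since $A\in\klnd(R)$ there is a fixed nonzero $D_0\in\lnd(R)$ with $\ker D_0=A$, and necessarily $A\neq R$ (otherwise $\ker D_0=R$ would force $D_0=0$); also $R$ is a domain, so I may cancel nonzero factors at will.

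For $(1)\Rightarrow(2)$, I would pick a nonzero $a\in I\cap A$ and set $D:=aD_0$. This $D$ is clearly a derivation of $R$, and I claim it is locally nilpotent: because $a\in\ker D_0$, a one-line induction gives $D^{n}(b)=a^{n}D_0^{n}(b)$ for all $b\in R$ and all $n\geq 0$, so $D_0^{n}(b)=0$ forces $D^{n}(b)=0$. Since $R$ is a domain and $a\neq 0$, the relation $D(b)=aD_0(b)$ shows $D(b)=0\iff D_0(b)=0$, hence $\ker D=\ker D_0=A$. Finally, as $a\in I$ and $I$ is an ideal, $D(I)=aD_0(I)\subseteq aR\subseteq I$, so $I$ is an integral ideal for $D$, as required.

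For $(2)\Rightarrow(1)$, suppose $D\in\lnd(R)$ satisfies $\ker D=A$ and $D(I)\subseteq I$; note $D\neq 0$ since $A\neq R$. Choose any nonzero $x\in I$. Because $D$ is locally nilpotent and $x\neq 0$, there is a largest integer $n\geq 0$ with $y:=D^{n}(x)\neq 0$. Then $D(y)=D^{n+1}(x)=0$, so $y\in\ker D=A$; and since $D(I)\subseteq I$ inductively yields $D^{k}(I)\subseteq I$ for every $k$, we also have $y\in I$. Thus $y$ is a nonzero element of $I\cap A$, so $I\cap A\neq(0)$.

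I do not expect a genuine obstacle here: both implications are essentially formal. The only step that warrants a moment's attention is the verification that $aD_0$ remains locally nilpotent in the first implication, and this is precisely where the hypothesis $a\in I\cap A$ (rather than merely $a\in I$) is used, via $a\in\ker D_0$.
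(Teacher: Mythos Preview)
Your proof is correct and follows essentially the same approach as the paper: for $(1)\Rightarrow(2)$ you multiply a fixed locally nilpotent derivation with kernel $A$ by a nonzero element of $I\cap A$, and for $(2)\Rightarrow(1)$ you iterate $D$ on a nonzero element of $I$ until landing in $\ker D$. Your write-up is slightly more detailed than the paper's (e.g., verifying $D^n(b)=a^nD_0^n(b)$ and that $\ker(aD_0)=A$ via integrality), but the argument is the same.
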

\begin{proof} Assume that $(1)$ holds. Let $0\neq a\in I\cap A$, and let $E\in \lnd (R)$ be
such that $A= \ker E$. Since $a\in A$, $aE\in \lnd (R)$ and $aE$ has
kernel $A$. Moreover, as $a\in I$, $(aE)(b)=
a(Eb)\in I$ for all $b\in I$. So $(a E)(I)\subseteq I$, and hence $D := aE$ is the required locally nilpotent
derivation of $R$ proving assertion $(2)$.
\par In the other direction, assume that $0\neq D\in \lnd
(R)$, $\ker D= A$, and $D(I)\subseteq I$.
Choose any $b\in I$, $b\neq 0$. Then the set $\{b, Db, D^2b, \dots\}$ is included in $I$ and contains a nonzero element of $A$.
\end{proof}
\noindent The following is an easy consequence of  \text{\rm\cite[Lemma $2.10$]{D}}.
\begin{lemma} \label{ledkugiq}
 Let $R$ be a noetherian $\bk$-algebra, and let $D\in \der( R)$. If
$I$ is an integral ideal for $D$, so is every minimal prime-over
ideal of $I$.
\end{lemma}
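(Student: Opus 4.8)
The plan is to reduce to a statement about a noetherian \emph{local} ring and then run a one‑line computation that exploits $\mathrm{char}\,\bk=0$; since derivations always extend to localizations, it costs nothing to carry $I$ along rather than first replacing $R$ by $R/I$ (the latter would also work: $D$ induces $\bar D\in\der(R/I)$, the minimal prime‑over ideals of $I$ are the preimages of the minimal primes of the noetherian ring $R/I$, and a prime $\mathfrak p\supseteq I$ is $D$‑stable iff $\mathfrak p/I$ is $\bar D$‑stable).

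Fix a minimal prime‑over ideal $\mathfrak p$ of $I$ and localize at it. Then $D$ extends to $D'\in\der(R_{\mathfrak p})$ by the quotient rule $D'(r/s)=\bigl(sD(r)-rD(s)\bigr)/s^{2}$, and $D'(IR_{\mathfrak p})\subseteq IR_{\mathfrak p}$. Because $\mathfrak p$ is minimal over $I$, the maximal ideal $\mathfrak m=\mathfrak pR_{\mathfrak p}$ is the \emph{only} prime of $R_{\mathfrak p}$ containing $IR_{\mathfrak p}$; hence $\sqrt{IR_{\mathfrak p}}=\mathfrak m$, and as $R_{\mathfrak p}$ is noetherian there is $n\ge 1$ with $\mathfrak m^{\,n}\subseteq IR_{\mathfrak p}$. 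Since $D'$ agrees with $D$ on $R$ and $\mathfrak m$ contracts to $\mathfrak p$, it then suffices to prove $D'(\mathfrak m)\subseteq\mathfrak m$: that gives $D(\mathfrak p)\subseteq D'(\mathfrak m)\cap R\subseteq\mathfrak m\cap R=\mathfrak p$, which is the assertion.

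So everything comes down to: \emph{if $(R_{\mathfrak p},\mathfrak m)$ is noetherian local and contains $\mathbb Q$, and $J:=IR_{\mathfrak p}$ satisfies $\mathfrak m^{\,n}\subseteq J$ and $D'(J)\subseteq J$, then $D'(\mathfrak m)\subseteq\mathfrak m$.} Suppose not, and choose $x\in\mathfrak m$ with $u:=D'(x)\notin\mathfrak m$, i.e.\ $u$ a unit. Then $x^{\,n}\in\mathfrak m^{\,n}\subseteq J$, and for $k=n,n-1,\dots,1$ in turn: from $x^{k}\in J$ we get $D'(x^{k})=k\,x^{k-1}u\in J$, and cancelling the units $k$ and $u$ — here $\mathrm{char}\,\bk=0$, so $\mathbb Q\subseteq\bk\subseteq R$ and every positive integer is invertible in $R_{\mathfrak p}$ — yields $x^{k-1}\in J$. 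Taking $k=1$ gives $x^{0}=1\in J$, contradicting $J\subseteq\mathfrak m\subsetneq R_{\mathfrak p}$. Hence $D'(\mathfrak m)\subseteq\mathfrak m$, and the lemma follows.

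I do not expect a genuine obstacle: the care‑points are the systematic cancellation of positive integers (legitimate precisely because $\mathrm{char}\,\bk=0$) and the standard fact that a prime minimal over $I$ becomes the maximal ideal after localization at it. A shorter but less self‑contained route is available if one quotes \cite[Lemma~2.10]{D}: assuming it yields that the radical of a differential ideal is again differential, write the (finitely many, since $R$ is noetherian) minimal prime‑over ideals of $I$ as $\mathfrak p_{1},\dots,\mathfrak p_{r}$, use prime avoidance to pick $a\in\bigl(\bigcap_{j\ne i}\mathfrak p_{j}\bigr)\setminus\mathfrak p_{i}$, and for $x\in\mathfrak p_{i}$ observe that $ax\in\bigcap_{j}\mathfrak p_{j}=\sqrt I$, so $D(ax)=xD(a)+aD(x)\in\sqrt I\subseteq\mathfrak p_{i}$, whence $aD(x)\in\mathfrak p_{i}$ and therefore $D(x)\in\mathfrak p_{i}$.
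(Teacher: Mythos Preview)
Your argument is correct. The paper itself gives no proof at all: it merely states that the lemma ``is an easy consequence of \cite[Lemma~2.10]{D}'', which is precisely the route you sketch in your final paragraph (radical of a differential ideal is differential, then prime avoidance to isolate each minimal prime). Your main argument, however, is genuinely different and entirely self-contained: localizing at the minimal prime $\mathfrak p$ to make $\mathfrak m=\mathfrak pR_{\mathfrak p}$ nilpotent modulo $IR_{\mathfrak p}$, then running the descent $x^{k}\in J\Rightarrow x^{k-1}\in J$ by differentiating and cancelling the unit $k\cdot D'(x)$. This buys independence from the cited reference and makes the role of characteristic zero completely explicit (the integers $k$ must be invertible), at the cost of a slightly longer computation. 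The citation-based route is shorter once the radical lemma is granted, and handles all minimal primes simultaneously via prime avoidance rather than one localization at a time. Both are standard; your presentation is more informative than the paper's bare citation.
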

\begin{lemma} \label{Expintegral}Let $B$ be a $\bk$-algebra, $J$ an ideal of $B$, and $D\in \lnd(B)$. If $e^{tD} (J)\subseteq J$ for some nonzero $t\in \bk$, then $J$ is an integral ideal for $D$.
\end{lemma}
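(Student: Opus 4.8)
The plan is to exploit the fact that, for a fixed $j\in J$, the assignment $s\mapsto e^{sD}(j)$ is polynomial in $s$ and takes values in $J$ for infinitely many $s\in\bk$; Lemma \ref{gjvd}(b) will then force every coefficient of that polynomial to lie in $J$, and reading off the degree-one coefficient yields $D(j)\in J$.

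Concretely, I would fix an arbitrary $j\in J$ and set $f(T)=\sum_{k\ge 0}\frac{D^k(j)}{k!}\,T^k\in B[T]$. This is a genuine polynomial: since $D$ is locally nilpotent, $D^k(j)=0$ for $k\gg 0$, and by definition $f(\lambda)=e^{\lambda D}(j)$ for every $\lambda\in\bk$. Next I would invoke the semigroup law $e^{\lambda D}\circ e^{\mu D}=e^{(\lambda+\mu)D}$, valid for any locally nilpotent $D$, to obtain $e^{ntD}=\bigl(e^{tD}\bigr)^{n}$ for every integer $n\ge 1$. Combined with the hypothesis $e^{tD}(J)\subseteq J$, a straightforward induction on $n$ gives $e^{ntD}(j)\in J$, that is, $f(nt)\in J$, for all $n\ge 1$. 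Because $\bk$ has characteristic zero and $t\ne 0$, the elements $nt$ ($n\ge 1$) are pairwise distinct, so $\{\,nt : n\ge 1\,\}$ is an infinite subset of $\bk$ on which $f$ takes values in $J$. By Lemma \ref{gjvd}(b), $f(T)\in J[T]$, hence each coefficient $\frac{D^k(j)}{k!}$ lies in $J$; taking $k=1$ gives $D(j)\in J$. Since $j\in J$ was arbitrary, $D(J)\subseteq J$, i.e. $J$ is an integral ideal for $D$.

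I do not expect a genuine obstacle in this argument; the only two points deserving a word of care are the finiteness of the sum defining $f$ (which is precisely local nilpotence of $D$ applied to $j$) and the exponential semigroup identity together with the iteration $e^{tD}(J)\subseteq J\Rightarrow e^{ntD}(j)\in J$, both of which are routine.
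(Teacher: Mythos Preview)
Your argument is correct and is essentially the paper's own proof: both form the polynomial $\sum_{k\ge 0}\frac{D^k(j)}{k!}T^k$, observe it lands in $J$ for infinitely many values of $T$, and apply Lemma~\ref{gjvd}(b). The only difference is that the paper simply asserts ``$e^{tD}(J)\subseteq J$ for infinitely many $t$'' while you spell out the reason via the semigroup identity $e^{ntD}=(e^{tD})^n$.
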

\begin{proof} First observe that if $e^{tD} (J)\subseteq J$ for some nonzero $t\in \bk$, then $e^{tD} (J)\subseteq J$ for infinitely many $t\in \bk$. Let $f\in J$. We will show that $D(f)\in J$. Let $n=
\deg_D (f)$, i.e., $n$ is the maximum nonnegative integer such that
$D^n (f)\neq 0$. Define a polynomial $P(T)\in B[T]$ by
\[P(T)= f + D(f) T + \frac {D^2(f)T^2}{2!}+\cdots + \frac {D^n(f)T^n}{n!}.\]
Then for infinitely many $t\in \bk$,
\[P(t)= f + D(f) t + \frac {D^2(f)t^2}{2!}+\cdots + \frac {D^n(f)t^n}{n!}= e^ {tD}(f)\in J.\]
 By Lemma \ref{gjvd}, all coefficients of $P(T)$ belong to $J$, so $D(f)\in J$.
\end{proof}

\begin{lemma}\label{lnd of normalization}Let $B$ be an affine $\bk$-domain, and let $D\in \lnd(B)$.
If $\tilde{B}$ denotes the normalization of $B$, then there exists
$\tilde{D} \in \lnd ( \tilde{B})$ such that $\ker \tilde{D}\cap B =
\ker D$.
\end{lemma}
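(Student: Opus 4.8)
The plan is to use \ref{basic}(ii) to place the normalization inside a polynomial ring over a field on which a locally nilpotent extension of $D$ is already available. We may assume $D\neq 0$, since for $D=0$ one takes $\tilde D=0$. Let $A=\ker D$ and $S=A\setminus\{0\}$; by \ref{basic}(ii), $D$ extends to a derivation $\mathfrak D\in\lnd(R_0)$, where $R_0:=S^{-1}B$, and $R_0=(\Frac A)^{[1]}$. Thus $R_0$ is a localization of $B$, so $\Frac R_0=\Frac B$, and being a polynomial ring in one variable over a field it is a normal domain. Since $B\subseteq R_0\subseteq\Frac B$ and $R_0$ is integrally closed in $\Frac B$, every element of $\tilde B$ already lies in $R_0$; that is, $\tilde B\subseteq R_0$.

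The next step is to show $\mathfrak D(\tilde B)\subseteq\tilde B$. Fix $x\in\tilde B$ and choose a monic $g(T)=T^n+b_{n-1}T^{n-1}+\cdots+b_0\in B[T]$ with $g(x)=0$. For each $t\in\bk$, the map $e^{t\mathfrak D}$ is a $\bk$-automorphism of $R_0$ (by \ref{basic}(iii)) whose restriction to $B$ is the automorphism $e^{tD}$ of $B$; applying $e^{t\mathfrak D}$ to $g(x)=0$ exhibits $e^{t\mathfrak D}(x)$ as a root of the monic polynomial $T^n+e^{tD}(b_{n-1})T^{n-1}+\cdots+e^{tD}(b_0)\in B[T]$, so $e^{t\mathfrak D}(x)\in\tilde B$ for all $t\in\bk$. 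On the other hand, local nilpotence of $\mathfrak D$ gives $e^{t\mathfrak D}(x)=\sum_{k=0}^{N}\frac{t^k}{k!}\mathfrak D^k(x)$, a polynomial in $t$ with coefficients in $R_0$; since it takes values in the $\bk$-submodule $\tilde B$ of $R_0$ for infinitely many $t$, an interpolation (Vandermonde) argument of the type used in Lemma \ref{gjvd} forces each $\mathfrak D^k(x)$ into $\tilde B$. In particular $\mathfrak D(x)\in\tilde B$.

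It remains to assemble the pieces. Set $\tilde D:=\mathfrak D|_{\tilde B}$, a derivation of $\tilde B$ by the previous step; it is locally nilpotent because $\mathfrak D$ is locally nilpotent already on the larger ring $R_0\supseteq\tilde B$. Finally, for $b\in B$ we have $\tilde D(b)=\mathfrak D(b)=D(b)$, whence $\ker\tilde D\cap B=\ker D$, which is what was claimed.

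I expect the only delicate point to be the inclusion $\mathfrak D(\tilde B)\subseteq\tilde B$: the direct approach of differentiating an integral equation for $x$ and solving for $\mathfrak D(x)$ is circular (differentiating a relation with coefficients in $B$ leaves a factor $g'(x)$ one cannot invert, while passing to the minimal polynomial over $\Frac B$ reintroduces coefficients in $\tilde B$ whose $\mathfrak D$-images are exactly what one is trying to bound). Routing through the exponential automorphisms of $R_0$ and interpolation sidesteps this; once it is done, both local nilpotence and the kernel computation are immediate consequences of \ref{basic}.
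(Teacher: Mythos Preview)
Your proof is correct, but the key step differs from the paper's. Both arguments begin identically: with $A=\ker D$, $S=A\setminus\{0\}$, and $R_0=S^{-1}B=(\Frac A)^{[1]}$, one has $\tilde B\subseteq R_0$ and an extension $\mathfrak D\in\lnd(R_0)$. The paper then simply clears denominators: since $B$ is affine, $\tilde B$ is a finitely generated $\bk$-algebra, so for a finite set of algebra generators $x_i$ of $\tilde B$ one can choose a single $s\in S$ with $s\,\mathfrak D(x_i)\in B\subseteq\tilde B$ for all $i$; the Leibniz rule then gives $(s\mathfrak D)(\tilde B)\subseteq\tilde B$, and one takes $\tilde D=(s\mathfrak D)|_{\tilde B}$. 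Your route instead shows that $\mathfrak D$ \emph{itself} already preserves $\tilde B$, by observing that each exponential $e^{t\mathfrak D}$ sends $\tilde B$ to $\tilde B$ (it transports an integral equation over $B$ to another one) and then extracting the Taylor coefficients $\mathfrak D^k(x)$ via a Vandermonde argument. Your approach yields the slightly sharper conclusion that no rescaling by $s$ is needed, and it never invokes finite generation of $\tilde B$, so it works for an arbitrary (not necessarily affine) $\bk$-domain; the paper's argument is shorter and more elementary but uses the affineness hypothesis in an essential way.
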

\begin{proof} We recall the well-known argument. Write $A= \ker D$ and let $S= A \setminus \{0\}$.
Then $D$ extends to a locally nilpotent derivation $\mathfrak D$ of $S^{-1} B$ such that $B\cap \ker \mathfrak D = A$.
As $S^{-1}B$ is a polynomial ring over the field $S^{-1}A$, it is normal, and consequently $B\subseteq \tilde B \subseteq S^{-1}B$.
 It follows that there exists $s\in S$ such that the locally nilpotent derivation $s \,\mathfrak D : S^{-1}B\rightarrow S^{-1}B$ maps $\tilde B$ into itself. The restriction $\tilde D: \tilde B \rightarrow \tilde B$ of $s\, \mathfrak D$ satisfies $\ker \tilde D \cap B = \ker D$.

\end{proof}
\begin{lemma} \label{new} For a two-dimensional affine $\bk$-domain $R$,
\[\text{$|\klnd (R)|> 1$ if and only if\,  $\ml(R)$ is algebraic over $\bk$.}\]
\end{lemma}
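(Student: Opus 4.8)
The plan is to prove both implications directly, the only real input being that the kernel of a nonzero locally nilpotent derivation of $R$ has transcendence degree $1$ over $\bk$. So first I would record this: given nonzero $D\in\lnd(R)$, put $A=\ker D$ and $S=A\setminus\{0\}$; by part (ii) of \ref{basic} we have $S^{-1}R=(\Frac A)^{[1]}$, hence $\Frac R$ has transcendence degree $1$ over $\Frac A$, and since $\trdeg_\bk\Frac R=\dim R=2$ this forces $\trdeg_\bk A=1$. In particular, no element of $\klnd(R)$ is algebraic over $\bk$, and $\ml(R)\neq R$ as soon as $\lnd(R)\neq\{0\}$.

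For the implication ``$|\klnd(R)|>1\Rightarrow\ml(R)$ algebraic over $\bk$'', I would pick nonzero $D_1,D_2\in\lnd(R)$ with $A_i:=\ker D_i$ and $A_1\neq A_2$. Since $\ml(R)\subseteq A_1\cap A_2$, it suffices to show that $A_1\cap A_2$ is algebraic over $\bk$. Suppose it is not; then there is some $t\in A_1\cap A_2$ transcendental over $\bk$. Then $\bk(t)\subseteq\Frac A_2$, while $\Frac A_1$ is algebraic over $\bk(t)$ because $\trdeg_\bk\Frac A_1=1$; hence every element of $A_1$ is algebraic over $\Frac A_2$. As $A_2=\ker D_2$ is algebraically closed in $R$ by part (i) of \ref{basic}, this gives $A_1\subseteq A_2$, and by symmetry $A_2\subseteq A_1$, contradicting $A_1\neq A_2$. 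Therefore $A_1\cap A_2$, and with it $\ml(R)$, is algebraic over $\bk$.

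For the converse I would argue by contradiction on the size of $\klnd(R)$. Assume $\ml(R)$ is algebraic over $\bk$; then $\ml(R)\neq R$, so $\lnd(R)$ contains a nonzero element and $\klnd(R)\neq\emptyset$. If $|\klnd(R)|=1$, every nonzero $D\in\lnd(R)$ has one and the same kernel $A$, so that $\ml(R)=\bigcap_{D\in\lnd(R)}\ker D=A$; but then $\ml(R)=A$ has transcendence degree $1$ over $\bk$ by the opening remark, contradicting the hypothesis. Hence $|\klnd(R)|\geq 2$.

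None of this is hard; if I had to name the one step that needs care it is the dichotomy used in the forward direction: one must first observe that $\trdeg_\bk(A_1\cap A_2)\leq\trdeg_\bk A_1=1$, so that ruling out the value $1$ (via the algebraic closedness of $\ker D_2$ in $R$) genuinely leaves $A_1\cap A_2$ algebraic over $\bk$. Everything else is routine bookkeeping with transcendence degrees together with facts (i) and (ii) of \ref{basic}.
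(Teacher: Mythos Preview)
Your proof is correct and follows essentially the same route as the paper's: both directions hinge on the facts that every $A\in\klnd(R)$ has $\trdeg_\bk A=1$ and is algebraically closed in $R$, and the forward implication in both cases amounts to observing that if $A_1\cap A_2$ had transcendence degree $1$ then algebraic closedness would force $A_1=A_2$. The paper compresses this into two sentences, while you spell out the transcendental-element argument and the $|\klnd(R)|\in\{0,1\}$ case analysis explicitly, but there is no substantive difference.
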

\begin{proof} Assume that $\ml (R)$ is algebraic over $\bk$. Since $\trdeg_\bk A = 1$ for any  $A\in \klnd (R)$, it follows that $|\klnd (R)|> 1$.  In the other direction, let  $A$ and $ A'$ be distinct elements of $\klnd (R)$.  As
 $\trdeg_\bk  A= 1 = \trdeg_\bk A'$ and $ A\cap A'$ is algebraically closed in $R$, it follows that $ A\cap  A'$ is algebraic over $\bk$. Hence $\ml(R)$ is algebraic over $\bk$.
\end{proof}
\begin{corollary}\label{nn} If  $R\in \mathcal M (\bk)$, then $\tilde R \in \mathcal M (\bk')$ for some algebraic field extension $\bk'\supseteq \bk$ such that $\bk' \subset \tilde R$. In particular, if $\bk$ is algebraically closed, then $\ml(\tilde R)= \bk$.
\end{corollary}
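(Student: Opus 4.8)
The plan is to push locally nilpotent derivations of $R$ up to its normalization and then invoke Lemmas \ref{lnd of normalization} and \ref{new}. Since $R$ is a two-dimensional affine $\bk$-domain with $\ml(R)=\bk$, in particular $\ml(R)$ is algebraic over $\bk$, so the ``only if'' direction of Lemma \ref{new} gives $|\klnd(R)|>1$. Hence I may choose two distinct kernels $A_1,A_2\in\klnd(R)$, say $A_i=\ker D_i$ with $D_i\in\lnd(R)$ for $i=1,2$.

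Next I would apply Lemma \ref{lnd of normalization} to $B=R$. Its normalization $\tilde R$ is again an affine $\bk$-domain with $\dim\tilde R=\dim R=2$, and the lemma produces $\tilde D_1,\tilde D_2\in\lnd(\tilde R)$ with $\ker\tilde D_i\cap R=A_i$. Since $A_1\neq A_2$, the kernels $\ker\tilde D_1$ and $\ker\tilde D_2$ are distinct, so $|\klnd(\tilde R)|>1$. Applying Lemma \ref{new} to the two-dimensional affine $\bk$-domain $\tilde R$ then shows that $\ml(\tilde R)$ is algebraic over $\bk$.

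It remains to identify the field $\bk'$. I would take $\bk'$ to be the algebraic closure of $\bk$ inside $\Frac(\tilde R)=\Frac(R)$; this is a field, algebraic over $\bk$, and since every element of $\bk'$ is integral over $\bk$, hence over $\tilde R$, and $\tilde R$ is normal, we have $\bk\subseteq\bk'\subset\tilde R$. I claim $\ml(\tilde R)=\bk'$. On one hand, $\ml(\tilde R)$ is a subring of $\Frac(\tilde R)$ algebraic over $\bk$, so $\ml(\tilde R)\subseteq\bk'$. On the other hand, for every nonzero $\tilde D\in\lnd(\tilde R)$ the subring $\ker\tilde D$ is factorially closed in $\tilde R$ by \ref{basic}(i), hence algebraically closed in $\tilde R$, and it contains $\bk$; since $\bk'/\bk$ is algebraic, this forces $\bk'\subseteq\ker\tilde D$, and intersecting over all such $\tilde D$ gives $\bk'\subseteq\ml(\tilde R)$. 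Thus $\ml(\tilde R)=\bk'$.

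Finally, $\tilde R$ is a two-dimensional affine $\bk'$-domain: it is finitely generated and of transcendence degree $2$ over $\bk$, hence over $\bk'$, and $\bk'\subseteq\tilde R$. Moreover every $\bk$-derivation in $\lnd(\tilde R)$ vanishes on $\bk'=\ml(\tilde R)$, so $\lnd(\tilde R)$ is the same whether $\tilde R$ is regarded as a $\bk$-algebra or as a $\bk'$-algebra, and therefore $\ml(\tilde R)=\bk'$ also when computed over $\bk'$; that is, $\tilde R\in\mathcal M(\bk')$. If $\bk$ is algebraically closed then $\bk'=\bk$, giving $\ml(\tilde R)=\bk$. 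The points requiring the most care are the reduction to two locally nilpotent derivations of $R$ with distinct kernels (for which the ``only if'' direction of Lemma \ref{new} is exactly what is needed) and the observation that the Makar-Limanov invariant is unaffected by enlarging the base field from $\bk$ to $\bk'$; the remaining verifications are routine.
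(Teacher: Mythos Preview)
Your argument is correct and follows essentially the same route as the paper's proof: use Lemma~\ref{new} to get $|\klnd(R)|>1$, lift two distinct kernels to $\tilde R$ via Lemma~\ref{lnd of normalization}, and apply Lemma~\ref{new} again to conclude that $\ml(\tilde R)$ is algebraic over $\bk$. You are slightly more explicit than the paper in identifying $\bk'$ as the algebraic closure of $\bk$ in $\Frac(R)$ and in justifying why the computation of $\ml(\tilde R)$ is unchanged upon passing from $\bk$ to $\bk'$, but this is only added detail, not a different idea.
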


\begin{proof} As $R\in \mathcal M (\bk)$, we get $|\klnd (R)|>1$ by Lemma \ref{new}.
Let $A_1$ and $A_2$ be distinct elements of $\klnd(R)$. There exist $\tilde A_1, \tilde A_2 \in \klnd (\tilde R)$ satisfying $\tilde A_i \cap R = A_i$ (cf. \ref{lnd of normalization}), so $|\klnd (\tilde R)|>1$. Hence $\ml (\tilde R)$ is algebraic over $\bk$ and is a field, say, $\ml (\tilde R) = \bk'$. Then clearly, $\bk\subseteq \bk' \subset \tilde R$ and $\bk'$ is algebraic over $\bk$.
\end{proof}
 \begin{lemma}\label{cute} Let $B\in \mathcal M (\bk)$. If $B$ is normal and $A\in \klnd (B)$, then $A\cong \bk^{[1]}$.
\end{lemma}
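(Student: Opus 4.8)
The plan is to use a second locally nilpotent derivation to build a morphism from an affine line \emph{onto} $\Spec A$, deduce from it an embedding of $A$ into $\kappa^{[1]}$ for some finite extension $\kappa$ of $\bk$, and then promote this to the statement that $A$ is a \emph{normal polynomial curve} over $\bk$; any such ring equals $\bk^{[1]}$.

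I would begin with three preliminary observations. Since $\ml(B)=\bk$ and the kernel of every locally nilpotent derivation of $B$ is algebraically closed in $B$, the algebraic closure of $\bk$ in $B$ is contained in $\ml(B)=\bk$; hence $\bk$ is algebraically closed in $B$, therefore (as $B$ is normal) in $\Frac B$, therefore in $\Frac A$, and as $\operatorname{char}\bk=0$ this shows $A$ is geometrically integral over $\bk$. Next, $A$ is normal: an element of $\Frac A$ integral over $A$ is integral over $B$, hence lies in $B$ by normality of $B$, and being algebraic over $A$ it lies in $A$ because $A$ is algebraically closed in $B$. Finally, since $B$ is a two-dimensional normal affine $\bk$-domain, the kernel of any locally nilpotent derivation of $B$ is again affine; in particular $A$ is affine.

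Now write $A=\ker D$. By Lemma~\ref{new}, $|\klnd(B)|>1$, so choose $A'\in\klnd(B)$ with $A'\neq A$, say $A'=\ker D'$. I claim $\trdeg_\bk\bk[A\cup A']=2$: if not, then $\bk[A\cup A']$ has transcendence degree $1$, so $\Frac\bk[A\cup A']$ is algebraic over $\Frac A$; then every element of $A'\subseteq\bk[A\cup A']$ is algebraic over $A$ and lies in $B$, hence lies in $A$ (as $A$ is algebraically closed in $B$), so $A'\subseteq A$, and then $A'=A$ since $A'$ is algebraically closed in $A$ and both have transcendence degree $1$ --- contradicting $A'\neq A$. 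As $\Frac\bk[A\cup A']\subseteq\Frac B$ are fields of the same finite transcendence degree over $\bk$, $\Frac B$ is algebraic over $\Frac\bk[A\cup A']$, i.e. $B$ is algebraic over $\bk[A\cup A']$. Applying Lemma~\ref{notapoint} with $A_1=A$ and $A_2=A'$ (both affine of dimension $1$) shows that $E=\{y\in\Spec A'\mid f_1(f_2^{-1}(y))\text{ is a point}\}$ is not dense in $\Spec A'$. On the other hand, \ref{basic}(iv) applied to $D'$ provides a nonempty open $U'\subseteq\Spec A'$ with $f_2^{-1}(\mathfrak p)\cong\mathbb A^1_{\kappa(\mathfrak p)}$ for all $\mathfrak p\in U'$. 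Since $\Spec A'$ is an irreducible affine $\bk$-variety, $U'\setminus\overline E$ is a nonempty open set, hence contains a closed point $y$; put $\kappa=\kappa(y)$, a finite extension of $\bk$. Then $f_2^{-1}(y)\cong\mathbb A^1_\kappa$ has coordinate ring $\cong\kappa^{[1]}$, and $f_1(f_2^{-1}(y))$ is not a point; being the image of the irreducible scheme $f_2^{-1}(y)$ in the irreducible curve $\Spec A$, it is therefore dense, so the induced $\bk$-algebra homomorphism $A\to\kappa^{[1]}$ has zero kernel. Thus $A\hookrightarrow\kappa^{[1]}$.

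To conclude, this embedding extends to a $\kappa$-algebra homomorphism $\kappa\otimes_\bk A\to\kappa^{[1]}$ whose image is the $\kappa$-subalgebra generated by $\kappa$ and $A$. By the first observation $\kappa\otimes_\bk A$ is a domain; it is affine over $\bk$ of dimension $1$ (as $A$ is affine of dimension $1$ and $[\kappa:\bk]<\infty$), while its image is a subdomain of $\kappa^{[1]}$ of transcendence degree $1$ over $\bk$, hence also of dimension $1$; a surjection of one-dimensional affine $\bk$-domains is an isomorphism, so $\kappa\otimes_\bk A$ is a $\kappa$-subalgebra of $\kappa^{[1]}$ of transcendence degree $1$ over $\kappa$, that is, a polynomial curve over $\kappa$. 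By Lemma~\ref{polynomial curve}, $A$ is then a polynomial curve over $\bk$; say $A\subseteq\bk[t]=\bk^{[1]}$ with $\Frac A=\bk(t)$. Picking $f=c_dt^d+\cdots+c_0\in A$ with $d\geq 1$ and $c_d\neq 0$, the element $t$ is a root of the monic polynomial $X^d+c_d^{-1}c_{d-1}X^{d-1}+\cdots+c_d^{-1}(c_0-f)\in A[X]$, so $t$ is integral over $A$; hence $\bk[t]=A[t]$ is integral over $A$, and normality of $A$ gives $\bk[t]\subseteq A$, i.e. $A=\bk[t]\cong\bk^{[1]}$. The delicate point is the very last step of the construction: the residue field $\kappa$ may genuinely exceed $\bk$ --- precisely the phenomenon the introduction warns about --- so one cannot read off directly that $A\subseteq\bk^{[1]}$; establishing that $A$ is geometrically integral over $\bk$ (first observation) and invoking Lemma~\ref{polynomial curve} is what circumvents this.
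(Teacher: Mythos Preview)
Your argument is correct and takes a genuinely different route from the paper's. The paper treats the algebraically closed case as known (citing \cite{Dag7}), base-changes $B$ to $\bar\bk$ (invoking an external result, Lemma~3.7 of \cite{Dag6}, to see $\bar\bk\otimes_\bk B$ is a domain with $\ml=\bar\bk$), normalizes, applies the known case there to get $\bar\bk\otimes_\bk A\subseteq\bar\bk^{[1]}$, and then descends via Lemma~\ref{polynomial curve}. You instead use a second kernel $A'$ together with Lemma~\ref{notapoint} and \ref{basic}(iv) to locate a single fibre $\mathbb A^1_\kappa$ of $f_2$ that dominates $\Spec A$, obtaining $A\hookrightarrow\kappa^{[1]}$ for a \emph{finite} extension $\kappa/\bk$; you then descend via Lemma~\ref{polynomial curve} and finish with normality. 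The advantage of your approach is that it is self-contained within the paper's framework: it avoids the two external citations, avoids normalizing any base change, and handles the algebraically closed and non-closed cases uniformly (when $\bk=\bar\bk$ you simply get $\kappa=\bk$). The paper's approach is shorter because it outsources the geometric work. Two minor remarks: your claim that kernels of locally nilpotent derivations on a two-dimensional normal affine domain are affine is indeed a standard consequence of Zariski's finiteness theorem, but it would be worth a citation since Lemma~\ref{notapoint} requires it for \emph{both} $A$ and $A'$; and your brief justification that $\bk$ is algebraically closed in $\Frac A$ (hence $\kappa\otimes_\bk A$ is a domain) neatly replaces the paper's appeal to \cite{Dag6}.
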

\begin{proof} This result is well-known when $\bk$ is algebraically closed. (See 2.3 of \cite{Dag7}, for instance.) To prove the general case,  denote the algebraic closure of $\bk$ by $\bar \bk$.
Let $A\in \klnd (B)$ and note that $A$ is a $1$-dimensional noetherian normal domain. To prove that $A\cong \bk^{[1]}$, it suffices to check that $A \subseteq \bk^{[1]}$.
 By Lemma $3.7$ of \cite{Dag6}, $\mathcal B:=\bar \bk \otimes_{\bk} B$ is an integral domain and $\ml (\mathcal B) = \bar \bk$. If $\tilde {\mathcal B}$ denotes the normalization of $\mathcal B$, then
$\ml (\tilde {\mathcal B}) = \bar \bk$ by Corollary \ref{nn}. Note that each element of $\klnd (\tilde {\mathcal B})$ is isomorphic to ${\bar \bk}^{[1]}$. Given $A\in \klnd (B)$, $\bar \bk \otimes_{\bk} A \in \klnd (\mathcal B)$ and there exists $D\in \lnd (\tilde{\mathcal B})$ such that $\ker D \cap  {\mathcal B}= \bar \bk \otimes_{\bk} A$ (cf. Lemma \ref{lnd of normalization}). As $\ker D \cong {\bar \bk}^{[1]}$, it follows that $\bar \bk \otimes_{\bk} A \subseteq {\bar\bk}^{[1]}$. Then $A\subseteq \bk^{[1]}$ by Lemma \ref{polynomial curve}.
\end{proof}
\section  {Completion of surfaces and fibrations}\label{3}
Throughout Section \ref{3}, we fix $\bk$ to be an algebraically closed
field of characteristic zero. All varieties  are assumed to be
$\bk$-varieties. In this section, we state some
properties of affine normal surfaces, fibrations on such surfaces,
 and completions of such surfaces. The material of this section is well-known.
\begin{nothing}{\rm Let $S$ be a complete normal surface. By an {\it SNC-divisor} on $S$, we mean a Weil divisor $D= \sum_{i=1}^{n} C_i$ where $C_1, \dots, C_n$ are distinct irreducible curves on $S$ satisfying the following conditions: \\
 (i) $\Supp (D)= \bigcup_{i=1}^{n} C_i$ is included in $S\setminus \Sing (S)$.\\
 (ii) Each irreducible component $C_i$ of $D$ is isomorphic to $\mathbb P^1$.\\
 (iii) If $i\neq j$ then $C_i\cdot C_j
 \leq 1$.\\
 (iv) If $i, j, k$ are distinct then $C_i\cap
 C_j\cap C_k = \emptyset$.}
\end{nothing}
\begin{definition} An {\it $\mathbb A^1$-fibration} (respectively, a {\it $\mathbb
P^1$-fibration}) on a surface $S$ is a surjective morphism
$\rho: S\rightarrow  Z$ on a nonsingular curve $Z$ whose
general fibres  are isomorphic to $\mathbb A^1$ (respectively, to
$\mathbb P^1$).  For our purposes, we will always consider $\mathbb
A^1$-fibrations whose codomain $Z$ is  $\mathbb
A^1$.
\end{definition}

\begin{definition} \label{sense}Let $S$ be an affine normal surface and $\rho: S\rightarrow \mathbb A^1$ an $\mathbb A^1$-fibration. By a {\it completion of the pair $(S, \rho)$}, we mean a commutative diagram of morphisms of algebraic varieties
\begin{equation} \label{completion}
 \xymatrix{
 S\ar[d]_{\rho}\ar@{^{(}->}[r] & \bar{S}\ar[d]^{\bar{\rho}}\\
 \mathbb{A}^{1}\ar@{^{(}->}[r] & \mathbb{P}^{1}}
\end{equation}
such that the ``$\hookrightarrow$" are open immersions, $\bar S$ is a complete normal surface, and $\bar S \setminus S$ is the support of an SNC-divisor of $\bar S$. \end{definition}
\noindent It is well-known that given any affine normal surface $S$ and an $\mathbb A^1$-fibration $\rho: S \rightarrow \mathbb A^1$, there exists a completion of $(S, \rho)$.

\begin{setup}\label{stup}
{\rm Throughout Paragraph \ref{stup}, we assume:\\
{\rm (i)} $S$ is an affine normal surface.\\
{\rm (ii)} $\rho: S\rightarrow \mathbb A^1$ is an $\mathbb A^1$-fibration.\\
{ \rm (iii)} $(\bar S, \bar \rho)$ is a completion of $(S, \rho)$, with notation as in Diagram (\ref{completion}); we let $D$ be the SNC-divisor of $\bar S$ whose support is $\bar S\setminus S$.}
\end{setup}

\noindent As  $\bar S$ is complete, $\bar \rho$ is closed. So given any curve $C\subset \bar S$, $\bar \rho (C)$ is either a point or all of $\mathbb P^1$. Accordingly we have:
\begin{subdefinition}{\rm A curve $C\subset \bar{S}$ is said to be
{$\bar{\rho}$-\it vertical} if $\bar{\rho}(C)$ is a point. Otherwise,
we say that the curve is {$\bar{\rho}$-\it horizontal}. Thus
$C\subset \bar{S}$ is $\bar{\rho}$-horizontal if and only if $\bar{\rho}(C) =
\mathbb{P}^1$.}
\end{subdefinition}
\begin{sublemma} \label{cv}Let the setup be as in \ref{stup}.
\begin{enumerate}\item[{\rm(a)}] For a general point $z\in \mathbb P^1$, $\bar \rho^{-1} (z)\cong \mathbb P^1$ and $\bar \rho^{-1} (z) \cap S \cong \mathbb A^1$. In particular, $\bar \rho: \bar S \rightarrow \mathbb P^1$ is a $\mathbb P^1$-fibration.
\item [{\rm (b)}] Exactly one irreducible component of $D$ is $\bar \rho$-horizontal.
\end{enumerate}
\end{sublemma}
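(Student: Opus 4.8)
The plan is to establish part~(a) first and then deduce part~(b) from it. For~(a), the second assertion is a formality: since $\mathbb{A}^1 \hookrightarrow \mathbb{P}^1$ is an open immersion and Diagram~(\ref{completion}) commutes, for every $z \in \mathbb{A}^1$ we have $\bar\rho^{-1}(z) \cap S = \rho^{-1}(z)$, and $\rho^{-1}(z) \cong \mathbb{A}^1$ for general $z$ by the definition of an $\mathbb{A}^1$-fibration. (A general point of $\mathbb{P}^1$ lies in the dense open $\mathbb{A}^1$, so ``general $z \in \mathbb{P}^1$'' and ``general $z \in \mathbb{A}^1$'' mean the same thing.) The real content of~(a) is that the general fibre of $\bar\rho$ is \emph{isomorphic} to $\mathbb{P}^1$.

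To see this, I would use that $\bar S$ is a normal surface, so $\Sing \bar S$ is finite; hence $\bar\rho(\Sing\bar S)$ is finite and, for general $z$, the fibre $\bar\rho^{-1}(z)$ is contained in the smooth surface $\bar S \setminus \Sing\bar S$. Applying generic smoothness (characteristic zero) to the dominant morphism $\bar\rho : \bar S \setminus \Sing\bar S \to \mathbb{P}^1$ of smooth varieties shows that for general $z$ the fibre $\bar\rho^{-1}(z)$ is smooth of dimension $1$, in particular reduced. Moreover, for general $z$ no irreducible component of $D$ is contained in $\bar\rho^{-1}(z)$: the $\bar\rho$-vertical components of $D$ lie over finitely many points of $\mathbb{P}^1$, while the $\bar\rho$-horizontal ones lie in no fibre. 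Hence $\rho^{-1}(z) = \bar\rho^{-1}(z)\cap S$ is a dense open subset of $\bar\rho^{-1}(z)$; since $\rho^{-1}(z)\cong\mathbb{A}^1$ is connected, $\bar\rho^{-1}(z)$ is connected, and being also smooth it is irreducible. Its function field equals that of its dense open $\mathbb{A}^1$, namely $\bk^{(1)}$, and a smooth complete curve with function field $\bk^{(1)}$ is $\mathbb{P}^1$. This proves~(a) and that $\bar\rho$ is a $\mathbb{P}^1$-fibration. I expect this paragraph to contain the only genuinely non-formal step, namely the passage from ``the general fibre is birationally $\mathbb{P}^1$'' to ``the general fibre is $\cong\mathbb{P}^1$'', which is exactly where generic smoothness and the finiteness of $\Sing\bar S$ (isolated normal surface singularities) are needed.

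For~(b) I would argue using~(a). Each $\bar\rho$-horizontal component $C$ of $D$ is isomorphic to $\mathbb{P}^1$ by the SNC hypothesis and maps onto $\mathbb{P}^1$; a nonconstant morphism of smooth complete curves is finite and surjective, so $C$ meets every fibre of $\bar\rho$. Fix a general $z$, so that $\bar\rho^{-1}(z)\cong\mathbb{P}^1$ and $\bar\rho^{-1}(z)\cap S = \rho^{-1}(z)$ is an open subset of $\bar\rho^{-1}(z)$ isomorphic to $\mathbb{A}^1$; consequently $\bar\rho^{-1}(z)\cap(\bar S\setminus S)$ is a single point $p_z$, which lies on $\Supp D$ but, for general $z$, on no $\bar\rho$-vertical component of $D$. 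If $D$ had no horizontal component, then for general $z$ the fibre $\bar\rho^{-1}(z)\cong\mathbb{P}^1$ would miss $\Supp D$ entirely, hence be contained in the affine surface $S$ --- impossible for a complete positive-dimensional variety. If $D$ had two distinct horizontal components $C_1, C_2$, then each meets $\bar\rho^{-1}(z)$; since $\bar\rho^{-1}(z)\cap\Supp D = \{p_z\}$, we would get $p_z\in C_1\cap C_2$ for every general $z$, and as $C_1\cap C_2$ is a single point by SNC condition~(iii), $p_z$ would be independent of $z$, contradicting the disjointness of $\bar\rho^{-1}(z)$ and $\bar\rho^{-1}(z')$ for $z\neq z'$. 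Therefore exactly one component of $D$ is $\bar\rho$-horizontal.
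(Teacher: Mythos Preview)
Your proof is correct and follows essentially the same route as the paper's (very brief) sketch: for general $z$ one has $\bar\rho^{-1}(z)\cap S=\rho^{-1}(z)\cong\mathbb{A}^1$, hence $\bar\rho^{-1}(z)\cong\mathbb{P}^1$ and the general fibre meets $D$ in exactly one point, which forces exactly one horizontal component. The paper merely asserts these implications as well-known, whereas you supply the details (generic smoothness, isolated normal surface singularities, the case analysis for~(b)); one small imprecision is that SNC condition~(iii) gives $|C_1\cap C_2|\le 1$ rather than $=1$, but the empty case yields the contradiction even faster, so your argument stands.
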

\begin{proof}As these facts are well-known, we only sketch the proof. By commutativity of Diagram (\ref{completion}), $\bar \rho^{-1} (z) \cap S = \rho^{-1} (z) \cong \mathbb A^1$ for general $z\in \mathbb P^1$. Assertion (a) follows from this. It also follows that the general fibre $\bar \rho^{-1} (z)$ meets $D$ in {\it exactly} one point, and this implies that $D$ has exactly one horizontal component.
\end{proof}

\section{Geometry of surfaces in the class $\mathcal M (\bk)$} \label{4} In this section, $\bk$ is an arbitrary field of characteristic zero (except in \ref{4.1} and \ref{mb}, where it is assumed to be algebraically closed).
\begin{setup}\label{4.1} {\rm The following assumptions and notations are valid throughout Paragraph \ref{4.1}. Suppose that $\bk$ is algebraically closed. Fix
$B\in \mathcal{M}(\bk)$, suppose that $B$ is normal, and let $S= \Spec B$. Consider distinct elements $A_1, A_2 \in \klnd (B)$ and recall from \ref{cute} that $A_i \cong \bk^{[1]}$ for $i=1,2$. Let $\rho_i : S \rightarrow \mathbb A^1$ be the morphism determined by the inclusion $A_i\hookrightarrow B$ for $i=1,2$. It follows from \ref{basic}(iv) that $\rho_1$ and $\rho_2$ are $\mathbb A^1$-fibrations, and \ref{notapoint} implies that $\rho_1$ and $\rho_2$ have distinct general fibres. Choose a complete normal surface $\bar S$ and morphisms $\bar \rho_1, \bar \rho_2: \bar S \rightarrow \mathbb P^1$ such that, for each $i=1,2$, $(\bar S, \bar \rho_i)$ is a completion of $(S, \rho_i)$ in the sense of \ref{sense}. We also consider  the following
diagram:
\begin{equation}\label{16}
 \xymatrix{
 S\ar@/^/[d]^{\rho_2}\ar@/_/[d]_{\rho_1}\ar@{^{(}->}[r] & \bar{S}\ar@/^/[d]^{\bar {\rho_2}}\ar@/_/[d]_{\bar \rho_1}\\
 \mathbb{A}^{1}\ar@{^{(}->}[r] & \mathbb{P}^{1}}
\end{equation}
Let $\infty$ be such that $\mathbb P^1 = \mathbb A^1 \cup \{\infty\}$ in Diagram (\ref{16}). For $i=1, 2$, let $H_i$ be the unique irreducible component of $D=\bar S \setminus S$ which is $\bar \rho_i$-horizontal. (See Lemma \ref{cv}.)

}
\end{setup}

\begin{sublemma}\label{4-1}We have
 $\bar{\rho_1}(H_2) = \{\infty\}$ and $\bar{\rho_2}(H_1)
= \{\infty\}$. In particular, $H_1\neq H_2$.
\end{sublemma}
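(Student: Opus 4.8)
The plan is to show that the horizontal component $H_2$ of $D$ for the fibration $\bar\rho_1$ must lie over the point $\infty$, and symmetrically for $H_1$ under $\bar\rho_2$. The key observation is that $H_2$ is $\bar\rho_2$-horizontal, hence it is \emph{not} contained in any fibre of $\bar\rho_2$; in particular $H_2 \not\subseteq \bar\rho_2^{-1}(\infty)$, so $H_2$ meets $S$... no: $H_2$ is a component of $D = \bar S\setminus S$, so $H_2\cap S=\emptyset$. The right way to say it: since $H_2$ is $\bar\rho_2$-horizontal, $\bar\rho_2(H_2)=\mathbb P^1$, so $H_2\cap \bar\rho_2^{-1}(\mathbb A^1)\neq\emptyset$; but $\bar\rho_2^{-1}(\mathbb A^1)\cap(\bar S\setminus S)$ consists only of $\bar\rho_2$-vertical curves (the boundary points in the affine fibres), since over general $z\in\mathbb A^1$ the fibre $\bar\rho_2^{-1}(z)$ meets $D$ in exactly one point by Lemma~\ref{cv}, and this boundary point traces out a vertical curve. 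Thus the only horizontal boundary component can be the one sitting over $\infty$. So I would argue: the set of $\bar\rho_1$-vertical components of $D$ are exactly those contained in $\bar\rho_1^{-1}(\text{finite set of points})$; a horizontal component $C$ of $D$ satisfies $\bar\rho_1(C)=\mathbb P^1$, hence meets $\bar\rho_1^{-1}(z)$ for general $z\in\mathbb A^1$, and since $\bar\rho_1^{-1}(z)\cap S=\rho_1^{-1}(z)\cong\mathbb A^1$ while $\bar\rho_1^{-1}(z)\cong\mathbb P^1$, the point(s) of $\bar\rho_1^{-1}(z)$ not in $S$ sit on $D$.

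The crucial point is to pin down \emph{where} $H_2$ goes under $\bar\rho_1$. I would reason as follows. Consider a general point $z \in \mathbb A^1 \subset \mathbb P^1$ for the fibration $\bar\rho_1$. By Lemma~\ref{cv}, $\bar\rho_1^{-1}(z) \cong \mathbb P^1$, it meets $S$ in $\rho_1^{-1}(z) \cong \mathbb A^1$, and it meets $D$ in exactly one point, which lies on $H_1$. So for general $z\in\mathbb A^1$, $\bar\rho_1^{-1}(z)$ is disjoint from $H_2$ (it meets $D$ only in $H_1\setminus H_2$; note $H_1\neq H_2$ will follow, but to avoid circularity I argue that $\bar\rho_1^{-1}(z)\cap D$ is a single point on the horizontal component $H_1$, and if that point were also on $H_2$ then $H_2$ would meet infinitely many distinct general fibres $\bar\rho_1^{-1}(z)$, forcing $\bar\rho_1(H_2)$ to be infinite, i.e. all of $\mathbb P^1$, i.e. $H_2$ is $\bar\rho_1$-horizontal — but $D$ has a \emph{unique} horizontal component for $\bar\rho_1$, namely $H_1$, whence $H_2=H_1$; then I get $\bar\rho_1(H_2)=\mathbb P^1\ni\infty$ trivially and separately must handle that degenerate case). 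Assuming $H_2\neq H_1$: then $H_2$ avoids the general fibre $\bar\rho_1^{-1}(z)$ for $z\in\mathbb A^1$, so $\bar\rho_1(H_2)$ is a proper closed subset of $\mathbb P^1$ missing a general point of $\mathbb A^1$; but $H_2$ is a curve and $\bar\rho_1$ is closed, so $\bar\rho_1(H_2)$ is a point, and since it misses general points of $\mathbb A^1$ it must be $\{\infty\}$. This gives $\bar\rho_1(H_2)=\{\infty\}$.

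By the symmetric argument with the roles of $\rho_1,\rho_2$ (hence $H_1,H_2$) interchanged, $\bar\rho_2(H_1)=\{\infty\}$. For the ``in particular'' clause: if we had $H_1 = H_2 =: H$, then from $\bar\rho_1(H_2)=\{\infty\}$ we would get $\bar\rho_1(H)=\{\infty\}$, so $H$ is $\bar\rho_1$-vertical; but $H=H_1$ is by definition the $\bar\rho_1$-horizontal component of $D$, a contradiction. Hence $H_1\neq H_2$.

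I expect the main obstacle to be the bookkeeping around the degenerate possibility $H_1 = H_2$: one must make sure the argument that ``$H_2$ meets only finitely many general fibres of $\bar\rho_1$, hence $\bar\rho_1(H_2)$ is a point'' is not secretly assuming $H_1\neq H_2$. The cleanest route, which I would adopt, is to deduce $\bar\rho_1(H_2)=\{\infty\}$ \emph{first} — using only that $D$ has exactly one $\bar\rho_1$-horizontal component and that the general fibre $\bar\rho_1^{-1}(z)$, $z\in\mathbb A^1$, meets $D$ in a single point lying on that horizontal component (Lemma~\ref{cv}) — so that whether or not $H_2$ is $\bar\rho_1$-horizontal, we always conclude $\infty\in\bar\rho_1(H_2)$; and then the genuinely new content $\bar\rho_1(H_2)=\{\infty\}$ and $H_1\neq H_2$ drops out by combining with Lemma~\ref{cv}(b). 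Everything else is routine properness and general-fibre arguments already available from Section~\ref{3}.
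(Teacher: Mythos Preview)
Your argument has a genuine gap at the key step. After establishing (under the hypothesis $H_1\neq H_2$) that $H_2$ is $\bar\rho_1$-vertical, you conclude that $\bar\rho_1(H_2)$ is a single point, and then assert: ``since it misses general points of $\mathbb A^1$ it must be $\{\infty\}$.'' This inference is invalid: any single point $\{a\}$ with $a\in\mathbb A^1$ also misses the general point of $\mathbb A^1$. Nothing in your argument excludes the possibility that $H_2$ sits inside a degenerate fibre $\bar\rho_1^{-1}(a)$ for some $a\in\mathbb A^1$; the boundary $D$ may well have $\bar\rho_1$-vertical components lying over finite points, and you have given no reason why $H_2$ cannot be one of them. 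The ``cleanest route'' you sketch in the last paragraph does not repair this, since it relies on the same unproved claim that $\infty\in\bar\rho_1(H_2)$ in the vertical case.

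What is missing is precisely the hypothesis, recorded in Setup~\ref{4.1} via Lemma~\ref{notapoint}, that $\rho_1$ and $\rho_2$ have distinct general fibres; you never invoke it. The paper's proof uses it as follows (for the symmetric statement $\bar\rho_2(H_1)=\{\infty\}$): for general $z_1\in\mathbb A^1$ the fibre $C_1=\bar\rho_1^{-1}(z_1)\cong\mathbb P^1$ meets $D$ in exactly one point $Q\in H_1$, and since $\rho_2(\rho_1^{-1}(z_1))$ is not a point, $\bar\rho_2(C_1)=\mathbb P^1$. Hence some point of $C_1$ maps to $\infty$ under $\bar\rho_2$; that point lies in $\bar\rho_2^{-1}(\infty)\subseteq D$, so it must equal $Q$. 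Thus $\bar\rho_2(Q)=\infty$ for infinitely many $Q\in H_1$, forcing $\bar\rho_2(H_1)=\{\infty\}$. Your approach can be salvaged by an analogous use of the distinct-fibres property, but as written it does not go through.
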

\begin{proof}Recall that $H_i\subseteq D$ and $\bar{\rho_i}(H_i)= \mathbb{P}^1$ for each $i=1,2$.  For a general $z_1\in
\mathbb{P}^1$, $(\bar{\rho_1})^{-1} (z_1) = C_1$, where $C_1$ is an irreducible curve of $\bar S$ which intersects $H_1$ in a unique point, say $Q$.  As  $\rho_1$ and $\rho_2$ have distinct general fibres, we choose $z_1$ so that
$\rho_2 (\rho_1^{-1}({z_1}))$ is not a point. Then $ \bar \rho_2 (C_1)$ is not a point,
so $\bar{\rho_2}(C_1)= \mathbb{P}^1$. Choose $Q_1\in C_1$ such that
$\bar{\rho_2}(Q_1)= \{\infty\}$. Clearly, $Q_1\in D$. Since $C_1$ meets $D$ in exactly one point, $C_1\cap D = \{Q_1\}$. Consequently, $\{Q \}=C_1\cap H_1 \subseteq C_1\cap D = \{Q_1\}$.
It follows that $\{Q_1\} = {C_1\cap H_1}$. Repeating this
process for infinitely many points $z_i$ of
$\mathbb{P}^1$, we get infinitely many points $Q_i \in
H_1$ satisfying $\bar{\rho_1} (Q_i) = z_i$ and $\bar{\rho_2}(Q_i) =
\{\infty\}$.  Hence we conclude that $\bar{\rho_2}(H_1)=\{\infty\}$.
Similarly, we can prove that $\bar{\rho_1}(H_2)=\{\infty\}$. As $\bar \rho_1 (H_1) = \mathbb P^1 = \bar \rho_2(H_2)$, it follows immediately that  $H_1$ and $H_2$ are distinct.
\end{proof}
\begin{subproposition}\label{curve shrinking to points} There  does not exist
an irreducible curve $C\subset S$ such that
$\rho_1(C)$ and $\rho_2(C)$ are points.
\end{subproposition}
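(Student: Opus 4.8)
The plan is to argue by contradiction inside the completion fixed in \ref{4.1}. Suppose $C\subset S$ is an irreducible curve with $\rho_1(C)=\{p_1\}$ and $\rho_2(C)=\{p_2\}$; since $\rho_i(S)\subseteq\mathbb A^1$ we have $p_i\in\mathbb A^1$, and for the Zariski closure $\bar C\subset\bar S$ we get $\bar\rho_i(\bar C)=\{p_i\}$ because $\bar\rho_i$ is closed and $C$ is dense in $\bar C$. As $\bar C$ is a complete curve while $C=\bar C\cap S$ is affine, $\bar C\not\subseteq S$, so choose $Q\in\bar C\cap D$. Since $\bar\rho_2(Q)=p_2\neq\infty$ whereas $\bar\rho_2(H_1)=\{\infty\}$ by Lemma~\ref{4-1}, $Q\notin H_1$; symmetrically $Q\notin H_2$. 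Hence $Q$ lies on an irreducible component $C_0$ of $D$ distinct from $H_1$ and $H_2$, and by Lemma~\ref{cv}(b) such a component is both $\bar\rho_1$-vertical and $\bar\rho_2$-vertical; therefore $\bar\rho_1(C_0)=\{p_1\}$ and $\bar\rho_2(C_0)=\{p_2\}$.

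Next I would analyse the fibre $F:=\bar\rho_1^{-1}(p_1)$. It is connected (the fibres of a $\mathbb P^1$-fibration on a normal complete surface are connected), it contains $\bar C$ and $C_0$, and it meets $H_1$, say at a point $R$, because $H_1$ is $\bar\rho_1$-horizontal; by Lemma~\ref{4-1}, $\bar\rho_2(R)=\infty$. On the connected curve $F$ the morphism $\bar\rho_2$ thus takes the value $p_2$ (on $\bar C$) and the value $\infty$ (at $R$), and since $p_2\neq\infty$ some irreducible component $L$ of $F$ must be $\bar\rho_2$-horizontal. As $L\subseteq F$ we have $\bar\rho_1(L)=\{p_1\}\neq\{\infty\}=\bar\rho_1(H_2)$, so $L\neq H_2$; but $H_2$ is the only $\bar\rho_2$-horizontal component of $D$, so $L\not\subseteq D$. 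Hence $L^{\circ}:=L\cap S$ is a nonempty curve in $S$ with $\rho_1(L^{\circ})=\{p_1\}$ on which $\rho_2$ is nonconstant; in particular $L^{\circ}\neq C$, so the fibre $\rho_1^{-1}(p_1)$ is reducible and has $C$ and $L^{\circ}$ among its components. The symmetric argument shows $\rho_2^{-1}(p_2)$ is reducible with $C$ as one of its components.

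To finish I would invoke the group actions. Pick $E_i\in\lnd(B)$ with $\ker E_i=A_i$ and let $\alpha_i=\{e^{tE_i}\}_{t\in\bk}$ be the associated $\mathbb G_a$-action on $S$. Since $a_i\in A_i$ is $\alpha_i$-invariant, $\alpha_i$ preserves $\rho_i^{-1}(p_i)$ and permutes its finitely many irreducible components; as $\mathbb G_a$ is connected it fixes each of them, so $\alpha_1(C)=C=\alpha_2(C)$ as sets. Thus $C$ is invariant under the group generated by two independent $\mathbb G_a$-actions, which is what must be ruled out: if some $\alpha_i$ does not act trivially on $C$ then, orbits of unipotent actions on affine varieties being closed, $C$ is a single $\alpha_i$-orbit, whence $C\cong\mathbb A^1$ and the other coordinate $a_j$ ($j\neq i$) is constant along $C$, so $E_i(a_j)$ is a nonzero element of $B$ vanishing on $C$; passing to $S^{-1}B=(\Frac A_i)^{[1]}$ and using that $\rho_j$ is an $\mathbb A^1$-fibration with general fibres distinct from those of $\rho_i$ yields a contradiction. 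If instead both $\alpha_i$ act trivially on $C$, then $E_1(B),E_2(B)\subseteq I(C)$, i.e.\ $C$ lies in the fixed locus of both actions, and using $A_i\cong\bk^{[1]}$ together with $A_1\neq A_2$ one sees that this forces $C$ to be a point.

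The main obstacle is precisely this last step: everything up to the reducibility of the two special fibres is routine once Lemmas~\ref{cv} and \ref{4-1} are in hand, but extracting a contradiction from the simultaneous $\mathbb G_a$-invariance of $C$ — equivalently, from $C$ lying in a fibre of $\rho_1$ and in a fibre of $\rho_2$ at once — is where the real work lies, and one should expect to need the $\mathbb A^1$-fibration structure of $\rho_1,\rho_2$ (in particular the fact recorded in \ref{basic}(iv) that their general fibres are affine lines, which rules out the degenerate proportionality configurations) in an essential way.
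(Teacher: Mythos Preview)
Your first paragraph is correct and matches the paper's opening: take the closure $\bar C$, note it meets $D$, and observe that the meeting point lies on a component $C_0$ of $D$ distinct from $H_1$ and $H_2$.

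Your second paragraph is also correct, but it is a detour: that $\rho_1^{-1}(p_1)$ has a second component on which $\rho_2$ is nonconstant contradicts nothing---degenerate fibres of $\mathbb A^1$-fibrations may well be reducible.

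The genuine gap is in your third paragraph, as you yourself flag. The invariance $\alpha_1(C)=C=\alpha_2(C)$ is correct but carries no new information: via Lemmas~\ref{Expintegral} and~\ref{opwbdkwed} it is equivalent to $I(C)\cap A_i\neq(0)$ for $i=1,2$, which is exactly the hypothesis that each $\rho_i(C)$ is a point. Your case analysis then does not close. In the ``nontrivial'' case you produce $E_i(a_j)\in I(C)$, but iterating $E_i$ only recovers $I(C)\cap A_i\neq 0$ once more, and the vague appeal to $S^{-1}B=(\Frac A_i)^{[1]}$ does not yield a contradiction. In the ``trivial'' case you assert that $E_1(B),E_2(B)\subseteq I(C)$ forces $C$ to be a point, but no argument is given, and ruling out a curve in the common fixed locus of two independent $\mathbb G_a$-actions on $S$ is essentially the proposition itself.

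The paper's proof stays in the completion and never invokes the group actions. The idea you are missing is the connectedness of the boundary divisor $D=\bar S\setminus S$ (a standard fact: the complement of an affine open in a normal complete surface is connected). Starting from your component $C_0$, walk along a chain $D_1,\dots,D_k$ of irreducible components of $D$ with $\bar C\cap D_1\neq\emptyset$ and $D_i\cap D_{i+1}\neq\emptyset$ until you first reach $D_k\in\{H_1,H_2\}$; all intermediate $D_i$ are then vertical for both $\bar\rho_1$ and $\bar\rho_2$. Pick $j\in\{1,2\}$ with $\bar\rho_j(D_k)=\{\infty\}$ (Lemma~\ref{4-1}). The connected set $\bar C\cup D_1\cup\cdots\cup D_k$ has finite $\bar\rho_j$-image, hence that image is a single point; but it contains both $p_j\in\mathbb A^1$ and $\infty$. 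That is the contradiction---obtained in a few lines once you exploit the connectedness of $D$ rather than that of the fibre $F$.
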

\begin{proof}
By contradiction, suppose that there exists an irreducible curve
$ C_0$ of $S$ such that $\rho_1(C_0) = a_1$ and $\rho_2(C_0) = a_2$ for
some points $a_i\in\mathbb A^1$.
Consider $C := \bar{C_0}$, the
closure of $C_0$ in $\bar{S}$. Then $C$ is a curve in $\bar{S}$ such
that $C\cap D\neq \emptyset$, $\bar \rho_1 (C) = a_1$, and $\bar \rho_2 (C) = a_2$ (where $a_1, a_2 \in \mathbb P^1\setminus \{\infty\}$). Since $D$ is connected, there is an integer $k\geq 1$ and a sequence $D_1, \dots, D_k$ of irreducible components of $D$ satisfying: \\
$\bullet$ For each $1\leq i < k$, $D_i$ is $\bar \rho_1$-vertical and $\bar \rho_2$-vertical, and $D_k \in \{H_1, H_2\}$. \\
$\bullet$ $C\cap D_1 \neq \emptyset$, and $D_i \cap D_{i+1} \neq \emptyset$ (for $1\leq i < k$).
\par Note that $\bar \rho_j (D_k) = \infty$ for some $j\in \{1,2\}$.
Since $C\cup D_1\cup \cdots \cup D_k$ is connected, it follows that $\bar \rho_j (C\cup D_1\cup \cdots \cup D_k)$ is connected and is a finite set of points, i.e.,
is one point. But $a_j , \infty \in \bar \rho_j (C\cup D_1\cup \cdots \cup D_k)$, so we obtain a contradiction.

\end{proof}
\noindent For the remainder of this paper, we assume that $\bk$ is an arbitrary field of characteristic zero.
\begin{definition} Let $B$ be an integral domain of characteristic zero. We say that $B$ has property $(\ast)$ if $B$ has no height $1$ proper ideal $I$
which intersects two distinct elements $A_1, A_2 \in \klnd(B)$ nontrivially. That is, $B$ has property $(\ast)$ if $I\cap A_1= 0$ or $I\cap A_2=0$ for all height $1$ proper ideals $I$ of $B$ and all distinct $A_1, A_2\in \klnd(B)$.
\end{definition}
\smallskip\noindent  Our next goal is to prove Theorem \ref{satisfies *}.  We do this in several steps, as follows.
\begin{corollary} \label{mb}Suppose that $\bk$ is algebraically closed and that $B\in \mathcal {M}(\bk)$ is normal. Then $B$ has property $(\ast)$.
\end{corollary}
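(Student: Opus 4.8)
The plan is to reduce property $(\ast)$ to the nonexistence result of Proposition~\ref{curve shrinking to points}, via the correspondence between height~$1$ prime ideals meeting a kernel nontrivially and curves that are vertical for the associated $\mathbb A^1$-fibration. Concretely, suppose $B$ fails property $(\ast)$: there exist distinct $A_1, A_2 \in \klnd(B)$ and a height~$1$ proper ideal $I$ with $I \cap A_1 \neq (0)$ and $I \cap A_2 \neq (0)$. Since $B$ is normal and noetherian, I may replace $I$ by a minimal prime over it; by Lemma~\ref{ledkugiq} combined with Lemma~\ref{opwbdkwed}, this minimal prime $\mathfrak p$ still satisfies $\mathfrak p \cap A_1 \neq (0)$ and $\mathfrak p \cap A_2 \neq (0)$ (apply Lemma~\ref{opwbdkwed} to get $D_i \in \lnd(B)$ with $\ker D_i = A_i$ and $D_i(I) \subseteq I$, then Lemma~\ref{ledkugiq} gives $D_i(\mathfrak p)\subseteq \mathfrak p$, then Lemma~\ref{opwbdkwed} again gives $\mathfrak p \cap A_i \neq (0)$). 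So without loss of generality $I = \mathfrak p$ is a height~$1$ prime.

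Next I translate this into geometry. Let $S = \Spec B$, and let $\rho_i : S \to \mathbb A^1$ be the $\mathbb A^1$-fibration determined by $A_i \cong \bk^{[1]}$ (using \ref{cute} to know $A_i \cong \bk^{[1]}$, so $\Spec A_i = \mathbb A^1$), exactly as in Setup~\ref{4.1}. The closed subvariety $C := V(\mathfrak p) \subset S$ is an irreducible curve (height~$1$ prime in a $2$-dimensional domain). The condition $\mathfrak p \cap A_i \neq (0)$ means there is a nonzero $a_i \in A_i$ vanishing on $C$; since $\rho_i$ is the map to $\Spec A_i = \mathbb A^1$ and $a_i$ is (up to scalar and translation) the coordinate function, the function $\rho_i$ is constant on $C$, i.e. $\rho_i(C)$ is a single point of $\mathbb A^1$. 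Thus $C$ is an irreducible curve in $S$ with both $\rho_1(C)$ and $\rho_2(C)$ points — and by \ref{notapoint} the fibrations $\rho_1, \rho_2$ have distinct general fibres, so we are precisely in the situation forbidden by Proposition~\ref{curve shrinking to points}. This contradiction proves $B$ has property $(\ast)$.

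I should be careful about one hypothesis mismatch: Setup~\ref{4.1} and Proposition~\ref{curve shrinking to points} presuppose that $A_1$ and $A_2$ are \emph{distinct} and that $\rho_1, \rho_2$ have distinct general fibres; the former is given, and the latter follows from Lemma~\ref{notapoint} applied with $A_1, A_2$ (noting $B$ is algebraic — indeed finite — over $\bk[A_1 \cup A_2]$ since both are $1$-dimensional with distinct fraction fields inside the $2$-dimensional $\Frac B$; this is the standard argument, essentially that $\Frac B = \Frac(\bk[A_1\cup A_2])$ because it contains two algebraically independent elements). I should also double-check that ``$\rho_i(C)$ is a point'' is genuinely equivalent to ``$\mathfrak p \cap A_i \neq (0)$'': one direction is as above; this is the direction I need, and it is the easy one.

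The main obstacle is really just bookkeeping rather than a deep difficulty: marshalling Lemmas~\ref{opwbdkwed} and \ref{ledkugiq} correctly to pass from an arbitrary height~$1$ ideal meeting the kernels to a height~$1$ \emph{prime} that still does, and making sure the geometric translation ($\mathfrak p \cap A_i \neq 0 \iff \rho_i$ collapses $V(\mathfrak p)$) is airtight given that $A_i$ is literally $\bk^{[1]}$. Once those are in place, the result is an immediate invocation of Proposition~\ref{curve shrinking to points}.
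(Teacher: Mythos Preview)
Your proof is correct and follows essentially the same route as the paper's: assume a counterexample, pass to a height~$1$ prime $\mathfrak p \supseteq I$, observe that $V(\mathfrak p)$ is collapsed by both $\rho_i$, and invoke Proposition~\ref{curve shrinking to points}. The only difference is that your detour through Lemmas~\ref{opwbdkwed} and~\ref{ledkugiq} is unnecessary: since $\mathfrak p \supseteq I$, the containment $\mathfrak p \cap A_i \supseteq I \cap A_i \neq (0)$ is immediate.
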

\begin{proof} By contradiction, suppose that there exist distinct $A_1, A_2\in \klnd(B)$ and a height $1$ ideal $I$ of $B$ such that $I\cap A_i \neq 0$ for $i= 1, 2$. Pick a height $1$ prime ideal $\mathfrak p$ of $B$ such that $\mathfrak p \supseteq I$, and note that $\mathfrak p \cap A_i \neq 0$ for $i=1,2$. So the irreducible curve $C= V(\mathfrak p)\subset \Spec B$ is mapped to a point by each canonical morphism  $\rho_i: \Spec B\rightarrow  \Spec A_i$ ($i= 1,2$). This contradicts \ref{curve
shrinking to points}.

\end{proof}
\begin{notation} Let $B\subseteq B'$ be integral domains of characteristic zero. We write $B\triangleleft B'$ to indicate that $B'$ is integral over $B$ and that, for each $A\in \klnd(B)$, there exists $A'\in \klnd(B')$ such that $A'\cap B = A$. Clearly, $\triangleleft$ is a transitive relation.
\end{notation}
\begin{lemma}\label{ppty}Let $B, B'$ be integral domains of characteristic zero such that $B\triangleleft B'$. If $B'$ has property {\rm ($\ast$)}, then so does $B$.
\end{lemma}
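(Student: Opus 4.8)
The plan is to argue by contraposition: assuming $B$ fails property $(\ast)$, I will manufacture a failure of property $(\ast)$ for $B'$. So suppose there is a proper ideal $I$ of $B$ of height $1$ and distinct $A_1,A_2\in\klnd(B)$ with $I\cap A_1\neq 0$ and $I\cap A_2\neq 0$. The idea is that all of this data — the ideal, and the two kernels — can be ``pushed up'' to $B'$ along the integral extension $B\subseteq B'$: the kernels by the very definition of $\triangleleft$, and the ideal by a lying-over argument.

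First I would replace $I$ by a prime. By the definition of height there is a prime $\mathfrak p$ of $B$ with $\mathfrak p\supseteq I$ and $\haut\mathfrak p=1$; since each $A_i\subseteq B$, we have $\mathfrak p\cap A_i\supseteq I\cap A_i\neq 0$, so $\mathfrak p$ still witnesses the failure of $(\ast)$. Next, since $B'$ is integral over $B$, Lying Over gives a prime $\mathfrak p'$ of $B'$ with $\mathfrak p'\cap B=\mathfrak p$, and Incomparability for integral extensions forces $\haut\mathfrak p'\le\haut\mathfrak p=1$; as $\mathfrak p'\cap B=\mathfrak p\neq 0$ and $B'$ is a domain, $\mathfrak p'\neq(0)$, whence $\haut\mathfrak p'=1$. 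Finally, from $B\triangleleft B'$ I pick $A_1',A_2'\in\klnd(B')$ with $A_i'\cap B=A_i$; in particular $A_i\subseteq A_i'$, and $A_1'\neq A_2'$ because their contractions to $B$ are the distinct rings $A_1,A_2$.

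It then remains to check that $\mathfrak p'$, $A_1'$, $A_2'$ constitute a genuine violation of property $(\ast)$ for $B'$. The ideal $\mathfrak p'$ is prime, hence proper, and has height $1$; and for $i=1,2$ one computes $\mathfrak p'\cap A_i'\supseteq\mathfrak p'\cap A_i=(\mathfrak p'\cap B)\cap A_i=\mathfrak p\cap A_i\neq 0$. Together with $A_1'\neq A_2'$ this contradicts the assumption that $B'$ has property $(\ast)$; no noetherian or affineness hypothesis enters anywhere. I expect the only delicate point to be the control of the height of $\mathfrak p'$: one must invoke the standard fact that, for an integral extension of domains, the contraction of a strictly ascending chain of primes is again strictly ascending, which is exactly what prevents $\haut\mathfrak p'$ from exceeding $1$.
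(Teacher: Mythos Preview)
Your proof is correct and follows essentially the same strategy as the paper: push the witnessing ideal and the two kernels up to $B'$ along the integral extension and invoke property $(\ast)$ there. The only cosmetic difference is that the paper works directly with the extended ideal $IB'$ (noting $IB'\neq B'$ and $\haut IB'=1$), whereas you first pass to a height-$1$ prime $\mathfrak p\supseteq I$ and lift it via Lying Over and Incomparability; both routes yield the same conclusion.
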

\begin{proof} Let $I\neq B$ be a height $1$ ideal of $B$ and let $A_1, A_2 \in \klnd(B)$ satisfy $I\cap A_i \neq 0$. As $B'$ is integral over $B$, $IB' \neq B'$ and
 $\haut IB' = 1$.
Since $B\triangleleft B'$, there exist $A_1', A_2' \in \klnd (B')$ such that $A_i' \cap B = A_i$ for $i=1,2$. Moreover, $A_i' \cap IB' \supset A_i \cap I \neq 0$. Since $B'$ has
property ($\ast$), it follows that $A_1' = A_2'$. Consequently, $A_1= A_2$.
\end{proof}
\noindent Recall that $\bk$ is an arbitrary field of characteristic zero.
\begin{theorem}\label{satisfies *} Each element $B$ of $\mathcal {M}(\bk)$ has property {\rm($\ast$)}.
\end{theorem}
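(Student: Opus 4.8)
The plan is to reduce, through a short chain of $\triangleleft$-extensions, to the situation already settled in Corollary \ref{mb} — a normal two–dimensional affine domain with trivial Makar–Limanov invariant over an \emph{algebraically closed} field — and then transport property $(\ast)$ back down by repeated use of Lemma \ref{ppty}.

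First I would pass to the normalization over $\bk$. Let $B\in\mathcal M(\bk)$ and let $\tilde B$ be its (affine) normalization. Since $\tilde B$ is integral over $B$ and, by Lemma \ref{lnd of normalization}, each $\ker D\in\klnd(B)$ is the contraction of some element of $\klnd(\tilde B)$, we have $B\triangleleft\tilde B$. By Corollary \ref{nn} there is an algebraic extension $\bk'$ of $\bk$ with $\bk'\subseteq\tilde B$ and $\tilde B\in\mathcal M(\bk')$; thus $\tilde B$ is now a \emph{normal} surface in $\mathcal M(\bk')$, and the algebraic closure of $\bk'$ is again $\bar\bk$.

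Next comes the base change to $\bar\bk$. Because $\tilde B$ is normal, Lemma $3.7$ of \cite{Dag6} shows that $\mathcal B:=\bar\bk\otimes_{\bk'}\tilde B$ is an integral domain with $\ml(\mathcal B)=\bar\bk$, so $\mathcal B\in\mathcal M(\bar\bk)$. I would then check that $\tilde B\triangleleft\mathcal B$: the extension is integral (as $\bar\bk$ is integral over $\bk'$), and for $A=\ker D\in\klnd(\tilde B)$ the base-changed derivation $\mathrm{id}_{\bar\bk}\otimes D$ — which makes sense because $D$ kills $\bk'=\ml(\tilde B)$, hence is $\bk'$-linear — is a nonzero locally nilpotent derivation of $\mathcal B$ whose kernel is $\bar\bk\otimes_{\bk'}A$ by flatness of $\bar\bk$ over $\bk'$, and this contracts to $A$ in $\tilde B$ by faithful flatness; so $\bar\bk\otimes_{\bk'}A\in\klnd(\mathcal B)$ is the required lift. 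Finally, letting $\tilde{\mathcal B}$ be the normalization of $\mathcal B$, Lemma \ref{lnd of normalization} gives $\mathcal B\triangleleft\tilde{\mathcal B}$, and Corollary \ref{nn} (applied over the algebraically closed field $\bar\bk$) gives $\ml(\tilde{\mathcal B})=\bar\bk$, so that $\tilde{\mathcal B}$ is a normal domain in $\mathcal M(\bar\bk)$.

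Corollary \ref{mb} then applies to $\tilde{\mathcal B}$, which therefore has property $(\ast)$; chaining $B\triangleleft\tilde B\triangleleft\mathcal B\triangleleft\tilde{\mathcal B}$ (transitivity of $\triangleleft$) and applying Lemma \ref{ppty} three times yields property $(\ast)$ for $B$. Every step but one is a direct appeal to the lemmas above; the point requiring care is the base change $\tilde B\to\mathcal B$ — that $\mathcal B$ is still a domain with trivial Makar–Limanov invariant (which is exactly where normality of $\tilde B$ and Lemma $3.7$ of \cite{Dag6} are needed) and that $\triangleleft$ is preserved by it (faithful flatness of $\bar\bk$ over $\bk'$). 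I expect this bookkeeping, rather than any genuine difficulty, to be the main obstacle.
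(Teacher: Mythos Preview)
Your proposal is correct and follows essentially the same route as the paper: reduce via $\triangleleft$ to a normal surface over $\bar\bk$ (normalize, base-change using Lemma~3.7 of \cite{Dag6}, normalize again), invoke Corollary~\ref{mb} there, and pull property~$(\ast)$ back through Lemma~\ref{ppty}. The only cosmetic differences are that the paper first reduces to the normal case and then builds the chain $B\triangleleft\mathcal B\triangleleft\tilde{\mathcal B}$ with $\mathcal B=\bar\bk\otimes_{\bk}B$, whereas you keep one long chain and tensor over $\bk'$; since $\bar{\bk'}=\bar\bk$ this makes no difference.
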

\begin{proof} If $\tilde B$ denotes the normalization of $B$, $B\triangleleft \tilde B$ follows by Lemma \ref{lnd of normalization}. Moreover,  Corollary \ref{nn} implies that $\tilde B\in \mathcal M (\bk')$ for some field $\bk'$. As  $B\triangleleft \tilde B$, it suffices to prove the theorem when $B$ is normal by Lemma \ref{ppty}.
 \par If $B$ is normal, $\mathcal B =\bar \bk \otimes_{\bk} B$ is an integral domain and $\ml (\mathcal B) = \bar \bk$ by Lemma $3.7$ of \cite{Dag6}. Then the normalization $\tilde {\mathcal B} \in \mathcal {M} (\bar \bk)$ by Corollary \ref{nn}, so $\tilde {\mathcal B}$ has property ($\ast$) by \ref{mb}.
  It suffices to prove that $B\triangleleft \tilde {\mathcal B}$ because then the result follows by Lemma \ref{ppty}.
  \par As $\bar \bk$ is integral over $\bk$, it follows that
   $\bar \bk \otimes_{\bk} B$ is integral over $\bk\otimes_{\bk} B \cong B$. Furthermore, given $A\in \klnd (B)$, $\bar A =\bar \bk \otimes _{\bk} A$ belongs to $\klnd (\mathcal B)$ and satisfies $\bar A \cap (\bk\otimes_{\bk} B) =  A$. This proves that $B\triangleleft \mathcal B$. Finally, $\mathcal B\triangleleft \tilde {\mathcal B}$ and  $\triangleleft$ is transitive, so it follows that $B\triangleleft \tilde {\mathcal B}$.
  \end{proof}

  \begin{remark}Every two-dimensional affine $\bk$-domain has property $(*)$.
Indeed, let $B$ be such a ring.  If $| \klnd(B) | \le 1$, then it
is trivial that $B$ has property $(*)$.
If $| \klnd(B) | > 1$ then
$B \in \mathcal M( \bk' )$ for some field $\bk'$, where $\bk'$ is algebraic over $\bk$ (cf. Lemma \ref{new}). Then the result follows from Theorem \ref{satisfies *}
\end{remark}
  \begin{definition} An affine scheme $\Spec A$ is {\it regular in codimension $1$} if and only if $A_{\mathfrak p}$ is regular for every height 1 prime ideal
$\mathfrak p$ of $A$.
\end{definition}
\begin{theorem}\text{\rm\cite[Thm $73$, p.246]{Matoldbook}} Let $A$ an affine domain containing a field. Then
 \[U = \setspec{ \mathfrak p \in \Spec A}{\text{$A_{\mathfrak p}$ is  a regular local ring}}\]
is a nonempty open subset of the affine scheme $X= \Spec A$.
\end{theorem}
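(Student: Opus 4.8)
The plan is to establish the two assertions separately: that $U$ is open in $X$, and that $U$ is nonempty. Fix a field $k$ over which $A$ is finitely generated (such a $k$ exists since $A$ is an affine domain), and work with the module of K\"ahler differentials $\Omega_{A/k}$, a finitely generated $A$-module. Since we are in characteristic zero, $k$ is perfect, and the key input is the standard fact that, for a localization $A_{\mathfrak p}$ of an affine algebra over a perfect field, $A_{\mathfrak p}$ is a regular local ring if and only if $\Spec A \to \Spec k$ is smooth at $\mathfrak p$, if and only if $(\Omega_{A/k})_{\mathfrak p}$ is a free $A_{\mathfrak p}$-module of rank $d := \dim A$. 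Granting this, $U$ is exactly the locus where $\Omega_{A/k}$ is locally free of rank $d$, and everything reduces to understanding that locus.

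For openness I would argue via Fitting ideals. Because $A$ is a domain, $\Omega_{A/k}\otimes_A \Frac A = \Omega_{\Frac A / k}$ has finite dimension over $\Frac A$, so $\Omega_{A/k}$ has a well-defined generic rank $r$; then $\mathrm{Fitt}_{r-1}(\Omega_{A/k})$ is an ideal of the domain $A$ that vanishes after tensoring with $\Frac A$ (Fitting ideals commute with base change, and the $(r-1)$-st Fitting ideal of a free module of rank $r$ is zero), hence is the zero ideal, and consequently $(\Omega_{A/k})_{\mathfrak p}$ is free of rank $r$ precisely when $\mathrm{Fitt}_r(\Omega_{A/k}) \not\subseteq \mathfrak p$. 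Thus the locus where $\Omega_{A/k}$ is locally free of rank $r$ equals $\Spec A \setminus V\big(\mathrm{Fitt}_r(\Omega_{A/k})\big)$, which is manifestly open. Concretely this is nothing but the Jacobian criterion: writing $A = k[X_1,\dots,X_n]/(f_1,\dots,f_m)$ with $c = n-d$, the module $\Omega_{A/k}$ is presented by the transposed Jacobian matrix, and $\mathrm{Fitt}_d(\Omega_{A/k})$ is the ideal generated by the $c\times c$ minors of $(\partial f_i/\partial X_j)$.

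For nonemptiness it remains to see $U \neq \emptyset$, equivalently that the generic rank $r$ of $\Omega_{A/k}$ equals $d$ (so that $\mathrm{Fitt}_d(\Omega_{A/k}) \neq 0$ and the open set above is nonempty). But $r = \dim_{\Frac A} \Omega_{\Frac A / k}$, and in characteristic zero the finitely generated field extension $\Frac A / k$ is separably generated, so $\dim_{\Frac A}\Omega_{\Frac A / k} = \trdeg_k \Frac A = d$. Hence $U$ contains the nonempty basic open set $\Spec A \setminus V\big(\mathrm{Fitt}_d(\Omega_{A/k})\big)$, and in fact equals it.

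The main obstacle is really the bundle of standard facts invoked at the outset: that over a perfect field ``regular'' and ``smooth over $k$'' coincide for local rings of affine algebras, and that smoothness is governed by local freeness of $\Omega_{A/k}$ of the expected rank. Once these are in hand, the rest is bookkeeping with Fitting ideals and transcendence degrees. In the stated generality, where the base field need not be perfect, ``regular'' may be strictly weaker than ``smooth over $k$'' and this route breaks down; one then appeals instead to the fact that affine algebras over a field are excellent and so have open regular locus. This subtlety is invisible in the characteristic-zero setting of the present paper, where every field is perfect.
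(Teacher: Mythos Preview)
The paper does not supply its own proof of this theorem; it is quoted verbatim from Matsumura with the citation \cite[Thm~73, p.~246]{Matoldbook} and used as a black box in the next proposition. So there is no ``paper's proof'' to compare against.

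That said, your argument is sound in the setting the paper actually needs (characteristic zero, hence perfect base field): the identification of the regular locus with the locus where $\Omega_{A/k}$ is locally free of rank $d=\dim A$ is correct for affine domains over perfect fields, your Fitting-ideal computation showing this locus is open is clean (the key step---$\mathrm{Fitt}_{d-1}(\Omega_{A/k})=0$ because $A$ is a domain and the ideal dies at the generic point---is correctly justified), and nonemptiness follows since the generic point $(0)$ always lies in $U$ (or, as you argue, since $\dim_{\Frac A}\Omega_{\Frac A/k}=\trdeg_k\Frac A=d$ by separability in characteristic zero). You also correctly flag that for imperfect base fields the equivalence of regularity and smoothness fails, so the full statement from Matsumura requires the excellence route instead; that caveat is accurate and well placed.
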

\begin{proposition}\label{good result}Let $B$ be an affine $\bk$-domain. If $\mathfrak p$ is a height $1$ prime ideal of $B$ such that $B_\mathfrak p$ is not regular, then $D(\mathfrak p) \subseteq \mathfrak p$ for every $D\in \lnd(B)$.
\end{proposition}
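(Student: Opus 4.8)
The plan is to exploit the $\bk$-automorphisms $e^{tD}$ generated by a locally nilpotent derivation, together with the finiteness of the set of "bad'' height-$1$ primes. Fix $D\in\lnd(B)$; we may assume $D\neq 0$, since otherwise the claim is trivial. Set
\[
\Sigma \;=\; \setspec{\mathfrak q\in\Spec B}{\haut\mathfrak q = 1\ \text{and}\ B_{\mathfrak q}\ \text{is not regular}},
\]
so that the proposition asserts that $D(\mathfrak p)\subseteq\mathfrak p$ for every $\mathfrak p\in\Sigma$.

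First I would observe that $\Sigma$ is finite. By the quoted theorem \cite[Thm $73$, p.246]{Matoldbook}, the regular locus $U\subseteq X:=\Spec B$ is a nonempty open set, so $Z:=X\setminus U$ is a proper closed subset; since $B$ is a domain, $X$ is irreducible and $Z$ does not contain the generic point, whence every irreducible component of $Z$ has codimension $\geq 1$ in $X$. The height-$1$ primes of $B$ lying in $Z$ are then precisely the generic points of the codimension-$1$ components of $Z$, of which there are finitely many because $B$ is noetherian. Hence $\Sigma$ is finite; write $\Sigma=\{\mathfrak p_1,\dots,\mathfrak p_r\}$. If $r=0$ the statement is vacuous, so assume $r\geq 1$ and put $I=\mathfrak p_1\cap\cdots\cap\mathfrak p_r$.

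Next I would show $e^{tD}(I)=I$ for every $t\in\bk$. For each $t$, the map $\phi_t:=e^{tD}$ is a $\bk$-algebra automorphism of $B$ by \ref{basic}(iii). For a prime $\mathfrak q$ of $B$, $\phi_t(\mathfrak q)$ is again prime, and $\phi_t$ carries the local ring $B_{\mathfrak q}$ isomorphically onto $B_{\phi_t(\mathfrak q)}$ while preserving height; hence $B_{\mathfrak q}$ is regular if and only if $B_{\phi_t(\mathfrak q)}$ is, so $\phi_t(\Sigma)\subseteq\Sigma$. Applying the same to $\phi_t^{-1}=\phi_{-t}$ shows that $\phi_t$ permutes the finite set $\Sigma$; since $\phi_t$ is a bijection of $B$ it therefore commutes with finite intersections and we get $\phi_t(I)=\bigcap_{i}\phi_t(\mathfrak p_i)=\bigcap_i\mathfrak p_i=I$. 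Choosing any nonzero $t\in\bk$, Lemma \ref{Expintegral} then yields that $I$ is an integral ideal for $D$, i.e.\ $D(I)\subseteq I$.

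Finally, each $\mathfrak p_i$ is a minimal prime over $I$: if a prime $\mathfrak p'$ satisfies $I\subseteq\mathfrak p'\subseteq\mathfrak p_i$, then $\mathfrak p'$, being prime and containing the finite intersection $I$, contains some $\mathfrak p_j$, and the chain $\mathfrak p_j\subseteq\mathfrak p'\subseteq\mathfrak p_i$ of primes between two height-$1$ primes forces $\mathfrak p'=\mathfrak p_i$. Since $B$ is a noetherian $\bk$-algebra and $I$ is integral for $D$, Lemma \ref{ledkugiq} shows that every minimal prime over $I$ is integral for $D$; in particular $D(\mathfrak p_i)\subseteq\mathfrak p_i$ for each $i$, which is exactly the assertion. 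I do not anticipate a genuine obstacle here: the real content is the finiteness of $\Sigma$ (immediate from the quoted theorem once one uses that $B$ is a domain) and the observation that each $e^{tD}$ must preserve the codimension-$1$ singular locus; granting these, the conclusion is pure bookkeeping with Lemmas \ref{Expintegral} and \ref{ledkugiq}.
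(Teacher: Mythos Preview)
Your proof is correct and follows the same overall strategy as the paper: both arguments show that the height-$1$ singular primes form a finite set $\{\mathfrak p_1,\dots,\mathfrak p_n\}$ permuted by the automorphisms $e^{tD}$, and both finish by invoking Lemma~\ref{Expintegral}. The only tactical difference is in the endgame. The paper fixes a single $\mathfrak p$ and uses a pigeonhole argument on $\bk$: since $\bk=\bigcup_i\{\lambda:e^{\lambda D}(\mathfrak p)=\mathfrak p_i\}$ and $\bk$ is infinite, some fibre contains two distinct values $\lambda_1,\lambda_2$, whence $e^{(\lambda_1-\lambda_2)D}(\mathfrak p)\subseteq\mathfrak p$ and Lemma~\ref{Expintegral} applies directly to $\mathfrak p$. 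You instead pass to the intersection $I=\bigcap_i\mathfrak p_i$, which is visibly $e^{tD}$-stable for every $t$, apply Lemma~\ref{Expintegral} to $I$, and then invoke Lemma~\ref{ledkugiq} on minimal primes to descend back to each $\mathfrak p_i$. Your route trades the pigeonhole step for an appeal to Lemma~\ref{ledkugiq}; neither approach is materially simpler than the other.
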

\begin{proof} The set $T= \setspec{ \mathfrak p \in \Spec B}{\text{$B_{\mathfrak p}$ is  not regular}}$ is a closed and proper subset of $X:= \Spec B$.
For every $\mathfrak p\in T$ satisfying $\haut \mathfrak p =1$, the closure  $\overline{\{\mathfrak p\}}$ is an irreducible component of $T$ and $\mathfrak p$ is the unique generic point of that component.
 As $T$ has only finitely many irreducible components, it follows that $T$ contains only finitely many prime ideals of height $1$. Denote these prime ideals by $\mathfrak p_1, \dots , \mathfrak p_n$.
\par Pick  $\mathfrak p \in \{\mathfrak p_1, \cdots , \mathfrak p_n\}$ and $D\in \lnd(B)$. We will prove that $D(\mathfrak p) \subseteq \mathfrak p$. In view of Lemma \ref{Expintegral}, it is enough to show that
\begin{equation}\label{6}\text{ $e^{\lambda D} (\mathfrak p)\subseteq \mathfrak p$ for some nonzero $\lambda\in \bk$. }
\end{equation}
As the group $\Aut (B)$ acts on the set $T$, it follows that it acts on $\{\mathfrak p_1, \dots, \mathfrak p_n\}$. Furthermore,
$\bk = \bigcup_{i=1}^n \setspec{\lambda \in \bk}{e^{\lambda D} (\mathfrak p) =\mathfrak p_i}$ . Since $\bk$ is infinite, there exists $i\in\{1, \dots, n\}$ such that
$\Omega:= \setspec{\lambda \in \bk}{e^{\lambda D} (\mathfrak p) = \mathfrak p_i}$ is infinite. Pick distinct elements $\lambda_1, \lambda_2$ of $\Omega$. Then
$e^{(-\lambda_2+\lambda_1)D} (\mathfrak p) \subseteq \mathfrak p$. So (\ref{6}) is true.
\end{proof}
\begin{corollary} \label{at last}If $B\in \mathcal M (\bk)$ and $X=\Spec B$, then the set
\[\Sing (X) = \setspec{\mathfrak p\in \Spec B}{\text{$B_\mathfrak p$ is not a regular local ring} }\]
is finite. Consequently, $B$ is regular in codimension $1$.
\end{corollary}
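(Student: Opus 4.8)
The plan is to combine property $(\ast)$ (Theorem \ref{satisfies *}) with Proposition \ref{good result} to show that no height $1$ prime ideal of $B$ can be a singular point, and then to run a short dimension-theoretic argument passing from this to finiteness of $\Sing(X)$.

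First I would observe that, since $B\in\mathcal M(\bk)$ has $\ml(B)=\bk$, which is trivially algebraic over $\bk$, Lemma \ref{new} gives $|\klnd(B)|>1$; so I may fix two distinct subalgebras $A_1,A_2\in\klnd(B)$, say $A_i=\ker E_i$ with $E_i\in\lnd(B)$. Now suppose, for contradiction, that $\mathfrak p$ is a height $1$ prime of $B$ with $B_{\mathfrak p}$ not regular. Proposition \ref{good result} then gives $E_i(\mathfrak p)\subseteq\mathfrak p$ for $i=1,2$, i.e.\ $\mathfrak p$ is an integral ideal for each $E_i$. Since $\mathfrak p\neq(0)$, the implication $(2)\Rightarrow(1)$ of Lemma \ref{opwbdkwed} forces $\mathfrak p\cap A_i\neq(0)$ for both $i$. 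But then the height $1$ proper ideal $\mathfrak p$ meets two distinct members of $\klnd(B)$ nontrivially, contradicting property $(\ast)$. Hence no such $\mathfrak p$ exists, which is exactly the assertion that $B$ is regular in codimension $1$.

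To conclude that $\Sing(X)$ is finite, I would invoke the theorem of Matsumura stated above (\cite{Matoldbook}): the regular locus of $X$ is open, so $\Sing(X)$ is a closed subset of the noetherian scheme $X=\Spec B$, and hence has only finitely many irreducible components. The generic point of $X$ is not singular (its local ring is $\Frac B$), and by the previous paragraph no height $1$ point is singular either; therefore the generic point of each irreducible component of $\Sing(X)$ is a prime of height $2$. Since $B$ is a $2$-dimensional affine $\bk$-domain, every height $2$ prime of $B$ is maximal, hence a closed point. Thus each of the finitely many irreducible components of $\Sing(X)$ is a single closed point, so $\Sing(X)$ is finite.

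The only step I expect to be mildly delicate is this last transition from ``no singular point of height $\le 1$'' to ``$\Sing(X)$ finite'': one must use that $\Sing(X)$ is Zariski-closed (so that it has finitely many components) and that in a $2$-dimensional affine $\bk$-algebra the height $2$ primes are precisely the closed points. Everything else is a direct application of the lemmas already in place.
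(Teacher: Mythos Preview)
Your proof is correct and follows essentially the same route as the paper: use Proposition \ref{good result} and Lemma \ref{opwbdkwed} to show that a height~$1$ singular prime would meet every kernel nontrivially, contradict this with property $(\ast)$ (Theorem \ref{satisfies *}) and $|\klnd(B)|>1$, and then pass from ``no height~$1$ singular primes'' to finiteness via closedness of $\Sing(X)$ and $\dim B=2$. The only differences are cosmetic: the paper quantifies over all $D\in\lnd(B)$ and phrases the contradiction as ``$\klnd(B)$ is a singleton,'' while you fix two kernels up front, and your finiteness paragraph is more explicit than the paper's one-line $\dim T=0$ conclusion.
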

\begin{proof} The set $T= \Sing X$ is a proper closed subset of $X$, so $\dim T \leq 1$.
It follows by \ref{good result} that given a height $1$ prime ideal $\mathfrak p$ of $B$ belonging to $T$, $D(\mathfrak p) \subseteq \mathfrak p$ for every $D\in \lnd (B)$. Then \ref{opwbdkwed} implies that $\mathfrak p\cap \ker D\neq 0$ for every $D\in \lnd (B)$. Since $B$ has property ($\ast$) by \ref{satisfies *}, we obtain that the set $\klnd(B)$ is a singleton, a contradiction. So $T$ contains no height $1$ prime ideal; consequently, $B$ is regular in codimension $1$. This also proves that $\dim T = 0$. So $T$ is a finite set of maximal ideals.
\end{proof}

\section {An application to complete intersections}
\begin{definition} \label {ci} Let $A$ be a domain containing a field $\bk$. We say that $A$ is a {\it complete intersection over $\bk$} if it is isomorphic to a quotient
\[ \bk[X_1, \dots, X_n]/ (f_1, \dots, f_p)\]
for some $n, p \in \mathbb N$, where $(f_1, \dots, f_p)$ is a prime ideal of $\bk[X_1, \dots, X_n]$ of height $p$. If $R$ is a complete intersection over $\bk$, we also call $\Spec R$ a complete intersection over $\bk$.
\end{definition}
\noindent Recall the following criterion for noetherian normal rings due to Serre.
\begin{theorem}{\rm (Serre) }A noetherian ring $A$ is normal if and only if it satisfies\\
$(R_1)$ $A_{\mathfrak p}$ is regular for all $\mathfrak p \in \Spec A$ with $\haut \mathfrak p \leq 1$, and \\
$(S_2)$  $\depth A_{\mathfrak p} \geq \Min (\haut \mathfrak p, 2)$ for all $\mathfrak p \in \Spec A$.
\end{theorem}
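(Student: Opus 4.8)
Since this is Serre's classical normality criterion, I only sketch a proof. The plan is first to reduce to the local case — both the conditions $(R_1)$, $(S_2)$ and the conclusion ``$A$ is normal'', understood as ``$A_{\mathfrak p}$ is an integrally closed domain for every $\mathfrak p$'', are stable under localization — and then to prove, for a noetherian local ring $A$, the equivalence: $A$ is an integrally closed domain $\iff$ $(R_1)$ and $(S_2)$ hold.

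For the implication ($\Rightarrow$) I would argue prime by prime. If $A_{\mathfrak p}$ is an integrally closed noetherian local domain with $\haut\mathfrak p\le 1$, then it is a field or a DVR, hence regular and with $\depth A_{\mathfrak p}=\haut\mathfrak p$; this gives $(R_1)$ and settles $(S_2)$ in codimension $\le 1$. If $\haut\mathfrak p\ge 2$, I would invoke the classical fact that a normal noetherian domain is a Krull domain: for $0\ne a\in\mathfrak p A_{\mathfrak p}$ (automatically a nonzerodivisor) the ideal $aA_{\mathfrak p}$ has no embedded components and all its associated primes have height $1$, so $\mathfrak p A_{\mathfrak p}$ is not among them; hence $\mathfrak p A_{\mathfrak p}$ contains an element $b$ that is a nonzerodivisor modulo $a$, and $a,b$ is a regular sequence, giving $\depth A_{\mathfrak p}\ge 2$.

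For ($\Leftarrow$), assume $A$ is local and satisfies $(R_1)+(S_2)$. The $(R_0)$ part forces $A$ to be reduced and the $(S_1)$ part forces $\Ass A=\Min A$; if two irreducible components of $\Spec A$ met at a prime $\mathfrak q$, then $A_{\mathfrak q}$ would be reduced with at least two minimal primes, hence of depth $0$, contradicting $\depth A_{\mathfrak q}\ge 1$. So the components are disjoint, and as $\Spec A$ is connected, $A$ is a domain. It then suffices to establish $A=\bigcap_{\haut\mathfrak p=1}A_{\mathfrak p}$ inside $\Frac A$, since by $(R_1)$ each such $A_{\mathfrak p}$ is a DVR, hence integrally closed, and an intersection of integrally closed subrings of $\Frac A$ is integrally closed.

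To prove that last equality, suppose some $x\in\Frac A$ lies in every $A_{\mathfrak p}$ with $\haut\mathfrak p=1$ but not in $A$. Then its ideal of denominators $I=\{c\in A:cx\in A\}$ is proper, nonzero, and contained in no height-$1$ prime, so any $\mathfrak q\in\Ass(A/I)$ has $\haut\mathfrak q\ge 2$ and therefore $\depth A_{\mathfrak q}\ge 2$. Using that $\mathfrak q A_{\mathfrak q}$ is associated to $A_{\mathfrak q}/IA_{\mathfrak q}$, one finds $y\in A_{\mathfrak q}$ with $z:=yx\notin A_{\mathfrak q}$ yet $\mathfrak q A_{\mathfrak q}\cdot z\subseteq A_{\mathfrak q}$; taking a regular sequence $u,v$ in $\mathfrak q A_{\mathfrak q}$ gives $uz,vz\in A_{\mathfrak q}$ and $v(uz)=u(vz)\in uA_{\mathfrak q}$, hence $uz\in (uA_{\mathfrak q}:_{A_{\mathfrak q}}v)=uA_{\mathfrak q}$ and so $z\in A_{\mathfrak q}$ — a contradiction. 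I expect this final step, which converts ``$\depth\ge 2$'' into the statement that no new denominators can appear in codimension $\ge 2$, to be the technical heart of the argument; the remaining work is routine manipulation of associated primes and localizations, and in the setting of this paper $A$ is an affine $\bk$-domain, so no subtleties about the meaning of ``normal'' intervene.
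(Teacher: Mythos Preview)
The paper does not prove this theorem: it is stated as Serre's classical normality criterion and used as a black box in the proof of Corollary~\ref{vapara2}. There is therefore no ``paper's proof'' to compare your sketch against.

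Your outline follows the standard route (reduce to the local case, show $(R_1)+(S_2)$ forces the ring to be a domain, then recover $A$ as the intersection of the height-$1$ localizations). One step deserves a small correction: the assertion that a reduced local ring with at least two minimal primes must have depth $0$ is false as stated---for instance $k[x,y]/(xy)$ localized at $(x,y)$ has depth $1$. The usual fix is to choose $\mathfrak q$ \emph{minimal} among primes containing two distinct minimal primes; then either $\haut\mathfrak q=1$ and $(R_1)$ makes $A_{\mathfrak q}$ a regular local ring (hence a domain, contradiction), or $\haut\mathfrak q\ge 2$ and a short Mayer--Vietoris/$\Ass$ argument shows $\depth A_{\mathfrak q}\le 1$, contradicting $(S_2)$. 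With that adjustment your sketch is fine, and in any case the paper only needs the result for affine $\bk$-domains, where the ``domain'' issue does not arise.
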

\begin{corollary}\label{vapara2}Let $B\in \mathcal M (\bk)$. If $B$ satisfies Serre's condition $(S_2)$, then $B$ is normal. In particular, complete intersection surfaces in the class $\mathcal M (\bk)$ are normal.
\end{corollary}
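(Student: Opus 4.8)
The plan is to obtain $B$ normal by checking Serre's two conditions and invoking the criterion quoted just above. The key observation is that the hard work has already been done: Corollary \ref{at last} tells us that any $B\in\mathcal M(\bk)$ is regular in codimension $1$, so $B_{\mathfrak p}$ is a regular local ring for every $\mathfrak p\in\Spec B$ with $\haut\mathfrak p=1$; and for $\haut\mathfrak p=0$ this is automatic since $B$ is a domain, whence $B_{(0)}=\Frac B$ is a field. Thus $B$ satisfies condition $(R_1)$. Since $B$ is an affine $\bk$-domain it is noetherian, so if we also assume $(S_2)$, Serre's criterion immediately yields that $B$ is normal; this disposes of the first assertion.

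For the second assertion I would show that a complete intersection surface in $\mathcal M(\bk)$ satisfies $(S_2)$ for free, so that the first part applies. Write $B\cong\bk[X_1,\dots,X_n]/(f_1,\dots,f_p)$ with $(f_1,\dots,f_p)$ a prime ideal of height $p$. The polynomial ring $\bk[X_1,\dots,X_n]$ is Cohen--Macaulay, and in a Cohen--Macaulay ring an ideal that is generated by $p$ elements and has height $p$ is generated by a regular sequence; hence $B$, being the quotient of a Cohen--Macaulay ring by a regular sequence, is Cohen--Macaulay. Consequently $\depth B_{\mathfrak p}=\haut\mathfrak p\geq\Min(\haut\mathfrak p,2)$ for every $\mathfrak p\in\Spec B$, i.e.\ $B$ satisfies $(S_2)$. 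Applying the first part of the corollary finishes the proof.

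There is essentially no serious obstacle here: the substantive input is Corollary \ref{at last}, and what remains is a routine combination of Serre's normality criterion with the standard fact that complete intersections are Cohen--Macaulay (hence satisfy $(S_2)$). The only point that needs a word of care is making sure the $(R_1)$ condition is verified for \emph{all} primes of height at most $1$, including the zero ideal, which is trivial because $B$ is a domain.
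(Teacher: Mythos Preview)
Your proposal is correct and follows essentially the same route as the paper: verify $(R_1)$ via Corollary~\ref{at last} (with the height-$0$ case trivial since $B$ is a domain), then apply Serre's criterion; for the second assertion, use that complete intersections are Cohen--Macaulay and hence satisfy $(S_2)$. The only difference is that you spell out the regular-sequence argument where the paper simply cites \cite{Eis} and \cite{Matoldbook}.
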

\begin{proof} Consider $B\in \mathcal M (\bk)$ and suppose that $B$ satisfies $(S_2)$. To show that $B$ is normal, it suffices to prove that $B$ satisfies $(R_1)$. So let $\mathfrak p \in \Spec B$. If $\haut \mathfrak p = 0$, then clearly $B_{\mathfrak p}$ is regular. If $\haut \mathfrak p =1$, $B_\mathfrak p$ is regular by Corollary \ref{at last}.
\par If $B$ is a complete intersection, then $B$ is Cohen-Macaulay (cf. \text{\rm\cite[Prop.$18.13$]{Eis}}), and so it satisfies $(S_2)$ (cf. \text{\rm\cite[17.I, p.125]{Matoldbook}}). Then the result follows by the previous case.
\end{proof}
\begin{example}\label{example}Let $B= \bk[ x, xy, y^2, y^3]$. Then $D= x
\frac{\partial}{\partial y}$, $E=y^2 \frac{\partial}{\partial x}$
are two nonzero locally nilpotent derivations of $B$ and $\ml(B)=\bk$. Note that $B$ is not normal. So by Corollary \ref{vapara2},  $\Spec B$ is not a complete intersection surface over $\bk$. By similar arguments, we can prove that $ S:= \Spec \bk[ x^2, x^3, y^3, y^4, y^5, xy, x^2y, xy^2, xy^3]$ is a ML-surface which is not a complete intersection surface over $\bk$.
\end{example}


\begin{thebibliography}{}
\addtolength{\leftmargin}{0.2in}
\bibitem {ML} T. Bandman and L. Makar-Limanov, \emph{Affine
 surfaces with $\ak(S)=\mathbb C$}, Michigan Math. J. \textbf{49} (2001), no. 3, 567-582.
 \bibitem {D} D. Daigle, \emph {Triangular derivations of $\bk[ X, Y, Z]$}, to appear in J. Pure Appl. Algebra.
\bibitem  {Dag6} D. Daigle, \emph {Affine surfaces with trivial Makar-Limanov
  invariant}, J. Algebra \textbf{319} (2008), no. 8, 3100-3111.
  \bibitem {Dag5} D. Daigle, \emph{Locally nilpotent
derivations}, Lecture notes prepared for the ``{\it September school of
algebraic geometry}", (2003). Avail. at
http://aix1.uottawa.ca/$\sim$ddaigle/
\bibitem  {DK} D. Daigle and R. Kolhatkar, {\em Complete intersection surfaces with trivial Makar-Limanov invariant}, preprint.
\bibitem {Dag7} D. Daigle and P. Russell, \emph{On log
  $\mathbb{Q}$-homology planes and weighted projective planes},
  Canad. J. Math. \textbf{56} (2004), no. 6, 1145-1189.
\bibitem {Dub} A. Dubouloz, \emph {Completions of normal affine surfaces with a trivial Makar-Limanov
invariant}, Michigan Math. J. \textbf{52} (2004), no. 2, 289-308.
\bibitem  {Eis}D. Eisenbud, Commutative Algebra with a view toward Algebraic Geometry, Grad. Texts in Math.,
v. \textbf{150}, $1995$.
\bibitem {Gur} R. V. Gurjar and M. Miyanishi, \emph{Automorphisms of affine surfaces with $\mathbb
A^1$-fibrations}, Michigan Math. J. \textbf{53} (2005), no. 1, 33-55.
 \bibitem {Matoldbook}H. Matsumura, Commutative Algebra, W. A. Benjamin, New York,
Second edition, $1980$.

\bibitem  {MaMi} K. Masuda and M. Miyanishi, {\em The additive group actions on
$\mathbb Q$-homology planes}, Ann. Inst. Fourier (Grenoble) \textbf{53} (2003), no. 2,
429-464.
\bibitem {S}H. Stichtenoth, Algebraic Function Fields and Codes, Universitext, $1991$.
\end{thebibliography}
 \end{document}